\newcommand{\R}{\mathbb{R}}
\newtheorem{theorem}{Theorem}[section]
\newtheorem{lemma}[theorem]{Lemma}
\theoremstyle{definition}
\newtheorem{remark}[theorem]{Remark}
\newtheorem{definition}[theorem]{Definition}
\title[On Legendrian Embbeddings into Open Book Decompositions]{On Legendrian Embbeddings into Open Book Decompositions}
\author{Selman Akbulut}
\address{Department of Mathematics, Michigan State University, Lansing MI, USA}
\email{akbulut@math.msu.edu}
\thanks{The first author is partially supported by NSF FRG grant DMS- 0905917}
\author{M. Firat Arikan}
\address{Dept. of Mathematics, Middle East Technical University, Ankara, TURKEY}
\email{farikan@metu.edu.tr}
\thanks{The second author is partially supported by NSF FRG grant DMS-1065910, and also by TUBITAK grant 1109B321200181}
\subjclass[2000]{57R65, 58A05, 58D27}
\keywords{contact, convex symplectic, Weinstein, Liouville, Lefschetz fibration, open book}
\date{\today}
\begin{document}

\begin{abstract}
We study Legendrian embeddings of a compact Legendrian submanifold $L$ sitting in a closed contact manifold $(M,\xi)$ whose contact structure is supported by a (contact) open book $\mathcal{OB}$ on $M$. We prove that if $\mathcal{OB}$ has  Weinstein pages, then there exist a contact structure $\xi'$ on $M$, isotopic to $\xi$ and supported by $\mathcal{OB}$, and a contactomorphism $f:(M,\xi) \to (M,\xi')$ such that the image $f(L)$ of any such submanifold can be Legendrian isotoped so that it becomes disjoint from the closure of a page of $\mathcal{OB}$.
\end{abstract}

\maketitle


\section{Introduction}

A \emph{contact manifold} is a pair $(M^{2n+1},\xi)$ where $M$ is a smooth manifold and $\xi\subset TM$ is a totally non-integrable $2n$-plane field distrubution on $M$. The distrubution $\xi$ is called a \emph{contact structure} on $M$,  and is said to be \emph{co-oriented} if it is the kernel of a globally defined $1$-form $\alpha$ with the property $\alpha \wedge (d\alpha)^{n} \neq 0$. Such  a $1$-form is called a \emph{contact form} on $M$. Here we always assume that $\xi$ is a co-oriented \emph{positive} contact structure, that is, $\xi=\textrm{Ker}(\alpha)$ and $\alpha \wedge (d\alpha)^{n} > 0$ with respect to a pre-given orientation on $M$. We say that two contact manifolds $(M,\xi)$ and $(M',\xi')$ are \emph{contactomorphic} if there exists a diffeomorphism $f:M\longrightarrow M'$ such that $f_\ast(\xi)=\xi'$. Two contact structures $\xi_0, \xi_1$ on a $M$ are said to be \emph{isotopic} if there exists a $1$-parameter family $\xi_t$ ($0\leq t\leq 1$) of contact structures joining them.\\

A submanifold $L \subset (M^{2n+1},\xi)$ is said to be \emph{isotropic} if \,$TL \subset \xi$, and an isotropic submanifold is called \emph{Legendrian} if $\textrm{dim}(L)=n$. A \emph{Legendrian embedding} is an embedding $\phi:\Sigma^n \hookrightarrow (M^{2n+1},\xi)$ of a smooth manifold $\Sigma^n$ such that the image $\phi(\Sigma^n)$ is Legendrian. A smooth 1-parameter family of Legendrian submanifolds is called a \emph{Legendrian isotopy}. Equivalently, a Legendrian isotopy is a smooth 1-parameter family $\phi_t:\Sigma^n \hookrightarrow (M^{2n+1},\xi)$ of Legendrian embeddings.\\

An \emph{open book} (\emph{decomposition}) $\mathcal{OB}$ on a closed manifold $M$ is determined by a pair $(B,\varphi)$ where $B\hookrightarrow M$ is codimension $2$ submanifold  with trivial normal bundle, and $\varphi : M-B \rightarrow S^1$ is a fiber bundle projection. The neighborhood of $B$ should have a trivialization $B \times D^2$, where the angle coordinate on the disk agrees with the fibration map $\varphi$. The manifold $B$ is called the \emph{binding}, and for any $t_0 \in S^1$ a fiber $X = \varphi^{-1}(t_0)$ is called a \emph{page} of the open book. \\

The following definition is due to Giroux \cite{Gi}: A contact structure $\xi$ on $M$ is said to be \emph{supported by} (or \emph{carried by}, or \emph{compatible with}) an open book $\mathcal{OB}=(B, \varphi)$ on $M$ if there exists a contact form $\alpha$ for $\xi$ such that
\begin{itemize}
\item[(i)] $(B, \alpha |_{TB})$ is a contact manifold.
\item[(ii)] For any $t\in S^1$, the page $X=\varphi^{-1}(t)$ is a symplectic manifold with symplectic form $d\alpha$.
\item[(iii)] If $\bar{X}$ denotes the closure of a page $X$ in $M$, then the orientation of $B\cong\partial \bar{X}$ induced by its contact form $\alpha |_{TB}$ coincides with its orientation as the boundary of $(\bar{X}, d\alpha)$.
\end{itemize}
We will say that an open book $\mathcal{OB}$ on $M$ is called a \emph{contact open book} if it carries a contact structure on $M$.\\

In the smooth category, given an $n$-dimensional submanifold $L$ of a closed $(2n+1)$-manifold $M$ admitting an open book $\mathcal{OB}$, it is, in general, not possible to isotope $L$ so that it becomes disjoint from a page $\bar{X}$ of $\mathcal{OB}$. However, if the spine of $\bar{X}$ is $n$-dimensional, the general position argument shows that we can make them disjoint. Moreover, it is known (see below for details) that any Weinstein domain of dimension $2n$ is homotopy equivalent to its core which is an $n$-dimensional CW-complex. Here we prove:

\begin{theorem} \label{thm:Main_Theorem_1}
Let $(M,\xi)$ be a contact manifold which admits a contact open book $\mathcal{OB}$ supporting $\xi$. If the pages of $\mathcal{OB}$ are Weinstein, then there exist a contact structure $\xi'$ on $M$, which is isotopic to $\xi$ and supported by $\mathcal{OB}$, and a contactomorphism $f:(M,\xi) \to (M,\xi')$ such that the image $f(L)$ of any compact embedded Legendrian submanifold $L$ of $(M,\xi)$ can be Legendrian isotoped until it becomes disjoint from the closure of a page of $\mathcal{OB}$.
\end{theorem}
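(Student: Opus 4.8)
The plan is to use the Weinstein structure on the pages to produce, via Giroux's construction, a contactomorphism $f$ that ``combs'' the open book toward the core of a single page, and then to push $f(L)$ by a Legendrian isotopy into a small neighborhood of the skeleton of that page; since the skeleton is an isotropic complex of dimension $\le n$ sitting in the interior of its page (hence disjoint both from the binding and from any other page), a general position argument finishes the proof. The final (smooth) step works because near an $n$--dimensional skeleton $K$ a generic $n$--dimensional submanifold is already disjoint from $K$, so that only the Legendrian refinement of this isotopy requires contact--geometric input.

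First I would fix, using Giroux's construction, a contact form $\alpha$ for $\xi$ adapted to $\mathcal{OB}$ so that, writing $M = M_\varphi \cup (B\times D^2)$ with $M_\varphi$ the mapping torus of the monodromy, on $M_\varphi$ the form $\alpha$ restricts to each page $X_t=\varphi^{-1}(t)$ as a Liouville form $\lambda_t$ for the Weinstein structure $(X_t,d\lambda_t)$, while on $B\times D^2$ it has the standard binding model $h_1(r)(\alpha|_{TB})+h_2(r)\,d\theta$ with $h_2(r)\sim r^2$ near $r=0$. Each closed page $\bar X_t$ then deformation retracts onto its Weinstein skeleton $K_t=\bigcap_{s\ge 0}\psi_t^{-s}(\bar X_t)$, an isotropic CW--subcomplex of dimension $\le n$ contained in the interior $\mathring X_t$ and disjoint from $B$. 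Next I would build a contact vector field $V$ on $(M,\xi)$ which on $M_\varphi$ is the Liouville vector field $Z_t$ of the page through each point, corrected by a field along $S^1$ having a sink at $t=0$ and a source at $t=\tfrac12$, and which is interpolated (necessarily vanishing along $B$) to a complete field supported away from $B$; its flow $f_s$ is then a contact isotopy whose backward direction simultaneously contracts each page toward its skeleton and all pages toward $X_0$. Setting $f:=f_{-T}$ for $T\gg 0$ gives a contactomorphism $f\colon(M,\xi)\to(M,\xi')$ with $\xi':=(f_{-T})_\ast\xi$; because the family $\{(f_{-s})_\ast\xi\}_{s\in[0,T]}$ is an isotopy of contact structures each of which is still carried by $\mathcal{OB}$ (the flow is tangent to the open book structure), $\xi'$ is isotopic to $\xi$ and supported by $\mathcal{OB}$.

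Given now an arbitrary compact embedded Legendrian $L\subset(M,\xi)$, the image $f(L)$ is a Legendrian of $(M,\xi')$ which, away from a fixed neighborhood of $B$, has been drawn into an arbitrarily small neighborhood $\mathcal{N}$ of $K_0$ (take $T$ large), while near $B$ it is unchanged. I would then Legendrian isotope $f(L)$ in two stages: (i) inside the standard binding neighborhood $B\times D^2$, a contact isotopy transverse to $B$ (such fields exist in the standard model) together with the negative Liouville flow on the local pages pushes the part of $f(L)$ lying near $B$ off $B$ and into $M_\varphi$, where the comb further sweeps it toward $K_0$; (ii) once $f(L)$ lies in a small neighborhood of $K_0\subset\mathring X_0$ and $\dim K_0\le n$, a $C^\infty$--small Legendrian perturbation (absorbed by a contact isotopy supported near $K_0$) makes it disjoint from $K_0$, hence contained in $\mathcal{N}$. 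Choosing $\mathcal N$ small enough that $\mathcal N\cap\bar X_{1/2}=\varnothing$ yields the conclusion.

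The main obstacle is stage (i): unlike the other steps, pushing the portion of $f(L)$ that meets the binding off $B$ is not a matter of general position, since for $n\ge 2$ a Legendrian cannot be made disjoint from the codimension--two contact submanifold $B$ by transversality alone, and $V$ is forced to vanish along $B$. This is exactly where the Weinstein hypothesis on the pages is used essentially: the skeleton of each page lies in the interior, so the local pages near $B$ carry outward--pointing Liouville fields along which $f(L)$ can be flowed away from $B$, and one must check that this local flow patches with the global comb without creating new intersections with $\bar X_{1/2}$. A secondary point requiring care is the verification that flowing along $V$ preserves compatibility with $\mathcal{OB}$ at every time, which follows from the fact that $V$ is tangent to the pages and conformally expanding on them, so that $(f_{-s})^\ast\alpha$ remains an adapted contact form up to rescaling.
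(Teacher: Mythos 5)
Your proposal does not go through as written; there are two genuine gaps. First, the ``comb'' $V$ is not a contact vector field. A vector field that is tangent to the open book and contact for an adapted form $\alpha=\beta+dz$ must come from a contact Hamiltonian; if you take $H=g(\theta)$ (the only way to get a field of your combing type) the resulting field is locally $g'(z)\chi+g(z)\partial z$, so at the pages where the $\theta$--component vanishes the restriction is $\pm\chi$ and, in particular, those pages \emph{and their skeleta} are invariant under the flow. The field $\chi+z\,\partial z$ used in the paper is contact only on the complement of a page (where the $\R$--coordinate $z$ is globally defined) and has to be cut off by a contact Hamiltonian before it can be used; the dynamics you want (every page contracted to its skeleton \emph{and} all pages swept to $X_0$) is simply not realizable by a global contact flow, and a non-contact flow neither yields a contactomorphism $f$ nor keeps $f(L)$ Legendrian under the subsequent isotopies. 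Relatedly, your $\xi':=(f_{-T})_*\xi$ would equal $\xi$ if $V$ were contact, so it cannot play the role it must play: the isotoped structure $\xi'$ is needed at the \emph{first} step, to replace $\xi$ by a $\beta$--standard structure (Remark \ref{rem:constructing_contact_form}) via Giroux's uniqueness and Gray stability -- your opening sentence ``fix an adapted form for $\xi$ with the standard binding model'' already presupposes this and is exactly where the paper's $f$ and $\xi'$ come from.

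Second, and more seriously, the quantifiers and the final disjointness are not handled. The contactomorphism $f$ must be fixed before $L$ is given, so ``take $T$ large'' cannot depend on $L$; and for any fixed $T$, a Legendrian meeting the invariant page $\bar X_{1/2}$ (in particular its skeleton) has image still meeting $\bar X_{1/2}$, since that page is preserved by the flow. Your stages (i) and (ii) never remove these intersections: perturbing near $K_0$ is irrelevant to missing $\bar X_{1/2}$, and stage (i) -- which you correctly identify as the main obstacle -- is asserted but not proved. What is actually required is to work on the page one wants to avoid: (a) make $S$ transverse to the binding $B$ (the paper's Lemma \ref{lem:making_transverse_to_binding}, using the explicit contact fields $Z_1,Z_2$ with Hamiltonians $-y,x$ in the model $B\times D^2$); (b) make the $(n-1)$--dimensional intersection $L=S\cap X$ disjoint from $\mathrm{Core}(X,\beta)$ by general position, $\dim\mathrm{Core}+\dim L=2n-1<2n$, implemented by compactly supported contact Hamiltonians in a $1$--jet neighbourhood of $S$ so the isotopy is Legendrian (Lemma \ref{lem:making_disjoint_from_core}); (c) push $L$ along a cut-off version of $\chi+z\,\partial z$ into the binding region and then across $B$ with cut-off combinations of $Z_1,Z_2$. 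Without steps (a)--(c) -- i.e., without an argument that removes the intersection of the Legendrian with the avoided page, including its core, and carries it through the codimension--two binding -- the proposal does not establish the theorem.
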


The proof of the theorem will be given in Section \ref{sec:Leg_Emb_into_Contact_OB}. One of the arguments used in the proof is making the given contact open book $\mathcal{OB}$ ``$\beta$-standard'' (see the next definition) by altering the compatible contact structure $\xi$ in its isotopy class.

\begin{definition} \label{def:Standard_contact_open_book}
 Let  $(X^{2n},d\beta)$ an exact symplectic manifold, and $M^{2n+1}$ be a manifold with a open book structure $\mathcal{OB}$ with pages $X$, supporting a contact structure $\xi$. Then we say that the $\mathcal{OB}$ is $\beta$-\emph{standard} if there exists a contact form $\alpha$ for $\xi$, such that $\alpha$ restricts to $\beta$ on every page.
\end{definition}

\medskip
We remark that any abstract contact open book is $\beta$-standard with respect to the contact form constructed by gluing the contact form $\beta+d\theta$ on the mapping torus $X \times \R / \sim$ with the one on $\partial X \times D^2$. This construction (which will be given in Remark \ref{rem:constructing_contact_form}) is originally due to Thurston and Winkelnkemper \cite{TW} for dimension three, and to Giroux \cite{Gi2} for higher dimensions. 

\medskip \noindent {\em Acknowledgments.\/} The authors would like to
thank NSF and TUBITAK for their supports.

\section{Preliminaries} \label{sec:Preliminaries}

A \emph{Liouville domain} (or a \emph{compact convex symplectic manifold}) is a pair $(W,\Lambda)$ where $W^{2n+2}$ is a compact manifold with boundary, together with a \emph{Liouville structure} (or a \emph{convex symplectic structure}), which meant there is a $1$-form $\Lambda$ on $W$ such that $\Omega=d\Lambda$ is symplectic and the $\Omega$-dual vector field $Z$ of $\Lambda$ defined by $\iota_Z \Omega=\Lambda$ (or, equivalently, $\mathcal{L}_{Z}\Omega=\Omega$), where $\iota$ denotes the interior product and $\mathcal{L}$ denotes the Lie derivative, should point strictly outwards along $\partial W$. Since $\Omega$ and $Z$ (resp. $\Omega$ and $\Lambda$) together uniquely determine $\Lambda$ (resp. $Z$), one can replace the notation with the triple $(W,\Omega,Z)$ (resp. $(W,\Omega,\Lambda)$). Here $Z$ is called {\it Lioville vector field}. The $1$-form $\Lambda_{\partial}:=\Lambda|_{\partial W}$ is contact (i.e., $\Lambda_{\partial} \wedge (d\Lambda_{\partial})^{n}>0$), and the contact manifold $(\partial W,\textrm{Ker} (\Lambda_{\partial}))$ is called the \emph{convex boundary} of $(W,\Lambda)$. \\

In the non-compact case (i.e., when $W$ is an open manifold), if we further assume that $Z$ is complete (i.e., its flow exists for all times), and also that there exists an exhaustion $W=\bigcup_{k=1}^{\infty} W^k$ by compact domains $W^k \subset W$ such that each $(W_k,\Lambda|_{W_k})$ is a Liouville domain with the convex boundary $(\partial W_k,\Lambda|_{\partial W_k})$ for all $k\geq 1$, then $(W,\Lambda,Z)$ is called a \emph{Liouville manifold} (see \cite{CE} for details). A \emph{Liouville cobordism} $(W,\Lambda, Z)$ is a compact cobordism $W$ with a Liouville structure $(\Lambda, Z)$ such that $Z$ points outwards along $\partial_+ W$ and inwards along $\partial_- W$. Note that any Liouville domain is a Liouville cobordism with $\partial_- W=\emptyset$.\\

If $Z^{-t}:W \rightarrow W$  $(t>0)$ denotes the contracting flow of $Z$, then the \emph{core} (or \emph{skeleton}) of the Liouville manifold $(W,\Lambda)$ is defined to be the set
$$\textrm{Core}(W,\Lambda):=\bigcup_{k=1}^{\infty} \bigcap_{t>0} Z^{-t}(W^k).$$ The interior of the core of any Liouville manifold is empty (Lemma 11.1, \cite{CE}), and so the core of any Liouville domain $(W,\Lambda, Z)$ is compact. If $M$ denotes the convex boundary $\partial W$, then one can see that the negative half of the symplectization $(M \times \mathbb{R}, d(e^t \Lambda |_{M}))$ symplectically embeds into $W$ (as a collar neighborhood of $M$ in $W$) so that its complement in $W$ is $\textrm{Core}(W,\Lambda)$ and the embedding matches the positive $t$-direction of $\mathbb{R}$ with $Z$.
The \emph{completion} $(\widehat{W},\widehat{\Lambda})$ of a Liouville domain $(W,\Lambda)$ is obtained from $W$ by gluing the positive part $\partial W \times [0,\infty)$ of the \emph{symplectization} $(\partial W \times \mathbb{R}, d(e^t \Lambda_{\partial}))$ of its convex boundary. Two Liouville domains $(W_i,\Lambda_i)$ are said to be \emph{Liouville isomorphic} if there exists a diffeomorphism $\phi:(\widehat{W}_0,\widehat{\Lambda}_0)\to (\widehat{W}_1,\widehat{\Lambda}_1)$, called a \emph{Liouville isomorphism}, between their completions $(\widehat{W}_i,\widehat{\Lambda}_i)$ such that    $\phi^*(\widehat{\Lambda}_1)=\widehat{\Lambda}_0+df$ where $f$ is some compactly supported smooth function on $\widehat{W}_0$.\\

To define Weinstein manifolds and domains, we need three preliminary definitions:

\begin{definition}
(i) A vector field $Z$ on a smooth manifold $W$ is said to \emph{gradient-like} for a smooth function $\Psi: W \to \R$ if $Z \cdot \Psi=\mathcal{L}_{Z} \Psi >0$ away from the critical point of $\Psi$.\\
(ii) A real-valued function is said to be \emph{exhausting} if it is proper and bounded from below.\\
(iii) An exhausting function $\Psi: W \to \R$ on a symplectic manifold $(W,\Omega)$ is said to be $\Omega$-\emph{convex} if there exists a complete Liouville vector field $Z$ which is gradient-like for $\Psi$.
\end{definition}

\begin{definition} \label{def:Weinstein_manifold}
A \emph{Weinstein manifold} $(W,\Omega,Z,\Psi)$ is a symplectic manifold $(W,\Omega)$ which admits a $\Omega$-convex Morse function $\Psi:W \to \R$ whose complete gradient-like Liouville vector field is $Z$. The triple $(\Omega,Z,\Psi)$ is called a \emph{Weinstein structure} on $W$. A \emph{Weinstein cobordism} $(W,\Omega,Z,\Psi)$ is a Liouville cobordism $(W,\Omega,Z)$ whose Liouville vector field $Z$ is gradient-like for a Morse function $\Psi:W \to \R$ which is constant on the boundary $\partial W$. A Weinstein cobordism with $\partial_- W=\emptyset$ is called \emph{Weinstein domain}.
\end{definition}

\begin{remark} \label{rem:Weinstein_manifold_is_Liouville}
Any Weinstein manifold $(W,\Omega,Z,\Psi)$ can be exhausted by Weinstein domains $W_k = \{\Psi^{-1}(-\infty,d_k] \} \subset W$ where $\{d_k\}$ is an increasing sequence of regular values of $\Psi$, and therefore, any Weinstein manifold is a Liouville manifold. In particular, any Weinstein domain is a Liouville domain. Also note that any Weinstein domain $(W,\Omega,Z,\Psi)$ has the convex boundary $(\partial W,\textrm{Ker}(\iota_{Z}\Omega|_{\,\partial W}))$, and the completion of a Weinstein domain is a Weinstein manifold.
\end{remark}

The following topologically characterizes Weinstein domains and will be used later.

\begin{theorem} [\cite{W}, see also Lemma 11.13 in \cite{CE}] \label{thm:Top_Charac_of_Weinstein_manifolds}
Any Weinstein domain of dimension $2n$ admits a handle decomposition whose handles have indices at most $n$.
\end{theorem}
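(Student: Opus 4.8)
The plan is to use the handle decomposition that comes from the Weinstein Morse function itself, and to bound the index of each handle by the dimension of an isotropic submanifold.

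Fix a Weinstein structure $(\Omega,Z,\Psi)$ on the domain $W^{2n}$. Since $\Psi$ is an exhausting Morse function that is constant on $\partial W$, has all of its critical points in the interior, and (because $\partial_-W=\emptyset$) attains its maximum along $\partial W$, classical Morse theory applied to the sublevel sets $\{\Psi\le c\}$ yields a handle decomposition of $W$ with exactly one handle for each critical point of $\Psi$, the handle built at $p$ having index equal to the Morse index of $p$. (After a small perturbation the critical values are distinct; the Liouville field $Z$, being gradient-like for $\Psi$, organizes the attachments, and we may assume its zeros are hyperbolic -- part of the standard normal form for a Weinstein structure.) Thus it suffices to show that the Morse index of every critical point of $\Psi$ is at most $n$.

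The crux is the following claim. For a critical point $p$, let $D_p$ be the set of points whose forward $Z$-trajectory converges to $p$. Since $\Psi$ strictly increases along nonconstant $Z$-trajectories, $D_p$ consists of points below $p$ flowing up to it; it is an embedded submanifold whose dimension equals the Morse index $k$ of $p$ (its truncation $D_p\cap\{\Psi\le\Psi(p)\}$ is the core disk of the $k$-handle attached at $p$). I claim $D_p$ is $\Omega$-isotropic. Write $\phi_t$ for the time-$t$ flow of $Z$; from $\mathcal{L}_Z\Omega=\Omega$ one gets $\phi_t^*\Omega=e^t\Omega$. Given $x\in D_p$ and $v,w\in T_xD_p$, on the one hand $\Omega_{\phi_t(x)}\big(d\phi_t(v),d\phi_t(w)\big)=e^t\,\Omega_x(v,w)$, and on the other hand, as $t\to+\infty$ the point $\phi_t(x)$ converges to the hyperbolic rest point $p$ while the vectors $d\phi_t(v),d\phi_t(w)$ -- being tangent to the stable manifold -- are exponentially contracted, so the left-hand side tends to $0$. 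Hence $\Omega_x(v,w)=0$, so $D_p$ is isotropic; therefore $k=\dim D_p\le n$.

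Putting the two steps together, every handle of the resulting decomposition of $W$ has index at most $n$, which is the assertion. I expect the isotropy claim to be the only substantive point; within it the step needing care is the exponential estimate -- one uses that $p$ is a hyperbolic zero of $Z$ so that $d\phi_t$ restricted to $TD_p$ decays exponentially, which is exactly what absorbs the growing factor $e^t$. (This is of course Weinstein's theorem \cite{W}, also recorded as Lemma 11.13 in \cite{CE}; the above is simply the argument one would run.)
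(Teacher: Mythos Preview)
The paper does not give its own proof of this statement; it is quoted as a known result with references to \cite{W} and \cite{CE} and then invoked in the proof of Lemma~\ref{lem:making_disjoint_from_core}. Your argument is essentially the standard one found in those references: use the Weinstein Morse function to produce the handle decomposition, and bound each index by showing that the $Z$-stable manifold of a critical point is $\Omega$-isotropic via the scaling relation $\phi_t^\ast\Omega=e^t\Omega$.

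One minor remark: you phrase the last step as needing the exponential contraction of $d\phi_t$ on $T D_p$ to ``absorb'' the factor $e^t$, but in fact less is required. From $\Omega_x(v,w)=e^{-t}\,\Omega_{\phi_t(x)}(d\phi_t v,d\phi_t w)$ it suffices that the right-hand side stay bounded as $t\to\infty$; since $\phi_t(x)\to p$, $\Omega_{\phi_t(x)}\to\Omega_p$, and $d\phi_t v,d\phi_t w\to 0$ on the stable manifold of a hyperbolic zero, the product already tends to $0$ and the extra $e^{-t}$ is not doing any work. The hyperbolicity assumption is what you genuinely need (to guarantee $D_p$ is a smooth manifold of the correct dimension and that $d\phi_t|_{TD_p}\to 0$), and, as you note, this can be arranged within the Weinstein class. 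With that caveat your proof is complete and matches the argument in the cited sources.
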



\vspace{.05in}
\section{Legendrian Embeddings into Contact Open Books} \label{sec:Leg_Emb_into_Contact_OB}

In this section we'll prove Theorem \ref{thm:Main_Theorem_1}. Let us start with by recalling how an open book $\mathcal{OB}=(B,\varphi)$ on a closed manifold $M$ determines an abtract open book $(X,h)$: By definition $B$ is a codimension-two subset of $M$ with a trivial normal bundle $B \times D^2$ and $\varphi:M\setminus B \to S^1$ is a fiber bundle map agreeing with the angular coordinate on the $D^2$-factor. To describe an abstract open book, we first pick a page $X=\varphi^{-1}(p)$ for some $p \in S^1$. Then the monodromy $h:X \to X$ can be read from the first return map of a vector field on $M$ transversal to the fibers of $\varphi$. Now using the resulting abstract open book we can construct a closed manifold $$M(X,h):=\Sigma(X,h) \cup_\partial (\partial X \times D^2)$$ where $\Sigma(X,h)$ is the mapping torus determined by $h$. Observe that the construction defines an open book decomposition $\mathcal{OB}_{(X,h)}$ on $M(X,h)$, and also that $M$ can be identified with $M(X,h)$ via some diffeomorphism $\Upsilon$ respecting the fibration maps on $M\setminus B$ and $M(X,h)\setminus (\partial X \times \{0\})$.\\

Next, from a collection of results from \cite{Gi} and \cite{Gi2}  (see also Section 7.3 of \cite{Geiges} for a detailed explanation) we recall the construction of a contact form (under suitable assumptions) on the manifold $M(X,h)$ in the following remark.

\begin{remark} \label{rem:constructing_contact_form}
Given an abstract open book $(X,h)$, consider the closed manifold $M(X,h)=\Sigma(X,h) \cup_\partial (\partial X \times D^2)$. Suppose that there exists a Liouville form $\beta$ on $X$ and $h \in \textrm{Symp}(X,d\beta)$. Then one can construct a contact structure $\xi_\beta$ on $M(X,h)$ so that $\mathcal{OB}_{(X,h)}$ becomes $\beta$-standard. Here we recall the explicit construction of the contact form $\alpha_\beta$ defining $\xi_\beta$: Since $h \in \textrm{Symp}(X,d\beta)$, the form $h^*(\beta)-\beta$ is closed, and it can be made exact by deforming $h$ through symplectomorphisms which are identity near $\partial X$.  Such a deformation of $h$ changes $M(X,h)$ in its diffeomorphism class, and so we may assume that $h^*(\beta)-\beta=-d\rho$ for some smooth function $\rho:X \to \R$. Adding a large enough constant, one can assume that $\rho$ is strictly positive everywhere on $X$, and so we can use $\rho$ to construct a smooth mapping torus $$\Sigma(X,h)_\rho:=X \times \R / \sim_{\rho} \quad \textrm{where} \quad (x,z) \sim_{\rho} (h(x),z+\rho(x)\,).$$ Consider the contact form $\alpha=\beta + \,dz$ on $X \times \R$, and let $\sigma_\rho:X \times \R \to X \times \R$ be the diffeomorphism defining $\sim_\rho$, that is, $\sigma_\rho(x,z)=(h(x),z+\rho(x)\,)$. Then we compute $$\sigma_\rho^*(\beta+dz)=h^*(\beta)+d(z+\rho)=\beta-d\rho+dz+d\rho=\beta+dz$$ which shows that $\alpha=\beta+dz$ decends to a contact form, say $\tilde{\alpha}$, on $\Sigma(X,h)_\rho$. Then using appropriate cut-off functions, one can construct a contact form $\alpha_\beta$ on $$\Sigma(X,h)_\rho \cup_\partial (\partial X \times D^2)\approx M(X,h)$$ by smoothly gluing $\tilde{\alpha}$ with the contact form $\beta|_{\partial X}+\frac{1}{2} r^{2} d\theta$ on the normal bundle $\partial X \times D^2$.
\end{remark}

The key observation for the proof of Theorem \ref{thm:Main_Theorem_1} is the following:

\begin{theorem} \label{thm:Missing_a_page}
Let $X$ be a Weinstein domain of dimension $2n \geq 2$ whose underlying Liouville structure is given by the Liouville form $\beta$. For $h \in \emph{Symp}(X,d\beta)$, consider the contact manifold  $(M(X,h),\xi_\beta:=\emph{Ker}(\alpha_\beta))$ as in Remark \ref{rem:constructing_contact_form}. Let $\phi: \Sigma^n \hookrightarrow (M(X,h),\xi_\beta)$ be a Legendrian embedding of a compact Legendrian submanifold $S:=\phi(\Sigma)\subset M(X,h)$. Then $S$ can be Legendrian isotoped (through Legendrian embeddings) to another embedded Legendrian submanifold $S'$ which  is disjoint from the closure of a page of $\mathcal{OB}_{(X,h)}$ on $M(X,h)$.
\end{theorem}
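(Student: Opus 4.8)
The plan is to exploit the Weinstein structure on the page $X$ to push the Legendrian $S$ off of one page $\bar{X}_0 = \varphi^{-1}(t_0) \cup B$ by a Legendrian isotopy, using the fact (Theorem \ref{thm:Top_Charac_of_Weinstein_manifolds}) that $X$ retracts onto its core $K := \mathrm{Core}(X,\beta)$, which is an isotropic CW-complex of dimension at most $n$. The idea is that the Liouville flow of $Z$ on $X$ (contracting toward $K$) can be promoted to a contact isotopy of $M(X,h)$ supported near $\bar{X}_0$, because $\alpha_\beta$ restricts to $\beta + dz$ on the mapping-torus part and the Liouville vector field $Z$ on the page is, up to a correction in the $z$-direction, a contact vector field for $\alpha_\beta$. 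Under this flow the page $X_0$ shrinks toward the thickened core $K \times \{0\}$ inside the mapping torus. I would set it up concretely by choosing a collar $\varphi^{-1}((-\epsilon,\epsilon)) \cong X \times (-\epsilon,\epsilon)$ on which $\alpha_\beta = \beta + dz$, letting $\psi^s$ denote the flow of the contact vector field $V_s$ generated (via $\alpha_\beta$) by a Hamiltonian cut off to equal $-\beta(Z)$ near $z=0$ and vanish near $z = \pm\epsilon$; as $s\to\infty$ this flow carries the slice $X \times \{0\}$ into an arbitrarily small neighborhood $\mathcal{N}$ of $K \times \{0\}$ while fixing the complement of the collar.

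Next I would use a general-position / dimension-count argument: the image $\psi^s(S)$ after a Legendrian isotopy of $S$ is an $n$-dimensional submanifold, and it suffices to make $\psi^s(S)$ disjoint from one page; equivalently, after applying the inverse contact isotopy to the page rather than to $S$, it suffices to make $S$ disjoint from $\psi^{-s}(\bar{X}_0)$, which is contained in the arbitrarily small neighborhood $\mathcal{N} \supset K \times \{0\}$ together with the binding $B$. Since $\dim K \le n = \dim S$ and $K$ is (generically) a CW-complex of dimension $\le n$ sitting in a $(2n)$-dimensional page, while $B$ has codimension $2$ in $M$, a transversality argument lets us Legendrian-isotope $S$ (keeping it Legendrian — one must perturb within the space of Legendrian embeddings, using that Legendrian isotopy is $C^\infty$-dense enough to achieve transversality to the fixed $K\times\{0\}$ and to $B$) so that $S$ is disjoint from $K \times \{0\}$ and from $B$. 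Taking $\mathcal{N}$ small enough then forces disjointness from $\psi^{-s}(\bar{X}_0)$ for large $s$, and carrying $S$ back by $\psi^s$ we obtain the desired $S' = \psi^s(S)$ disjoint from $\bar{X}_0$.

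I expect the main obstacle to be making the two moves — the contact flow $\psi^s$ coming from the Liouville field on the page, and the transversality perturbation of the Legendrian — simultaneously legitimate. In particular: (a) verifying that the cut-off Hamiltonian indeed generates a contact vector field whose flow behaves like the Liouville retraction on $X\times\{0\}$ while being compactly supported in the collar (so that it is a genuine ambient contact isotopy of $M(X,h)$, hence carries Legendrians to Legendrians); and (b) arranging the transversality of $S$ to the core $K\times\{0\}$ within the class of Legendrian embeddings, since arbitrary $C^\infty$-small perturbations need not be Legendrian — here one should use that $K$, being isotropic, can be engulfed in a Darboux-type neighborhood modeled on a jet space, in which Legendrian perturbations are flexible enough to achieve general position against the $\le n$-dimensional core. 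A secondary technical point is the interaction with the binding $B$: one must ensure the contact form $\alpha_\beta = \beta|_{\partial X} + \tfrac12 r^2 d\theta$ near $B$ is compatible with pushing $S$ away from $B$, which follows because $B$ has codimension $2$ and $S$ has dimension $n < 2n+1 - 2$ for $n \ge 2$, with the case $n=1$ (so $\dim X = 2$) handled directly since then the core of a surface Weinstein domain is a graph and the same dimension count applies.
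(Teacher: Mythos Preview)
Your strategy is in the same spirit as the paper's, viewed dually: the paper uses the contact vector field $Z_3 = \chi + z\,\partial z$ (with contact Hamiltonian $H_3=z$, not $-\beta(Z)$ --- note $\beta(\chi)=\iota_\chi\iota_\chi d\beta=0$, so your proposed Hamiltonian vanishes identically) to push $S$ \emph{outward} along the Liouville direction, whereas you propose to run the flow backward and shrink the page toward its core. The dimension count against the core is correct, and achieving Legendrian general position via contact Hamiltonians is exactly what the paper carries out (at length) in Lemma~\ref{lem:making_disjoint_from_core}.

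The genuine gap is your treatment of the binding. You assert that $S$ can be pushed off $B$ ``because $B$ has codimension $2$ and $S$ has dimension $n < 2n+1-2$.'' That is not the inequality governing disjointness by general position: one needs $\dim S + \dim B < \dim M$, i.e.\ $n + (2n-1) < 2n+1$, which \emph{fails} for every $n \geq 2$. Generically $S \cap B$ is $(n-2)$-dimensional and nonempty, so no small Legendrian perturbation will separate $S$ from $B$. Relatedly, your description of $\psi^{-s}(\bar{X}_0)$ as lying in $\mathcal{N} \cup B$ overlooks that the portion of the closed page inside $Y_2 = B \times D^2$ is the $2n$-dimensional set $B \times \{\text{ray at angle }t_0\}$, not just the $(2n-1)$-dimensional binding; your flow, cut off in the mapping-torus collar, does not move that part at all.

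The paper circumvents this by never attempting to make $S$ disjoint from $B$. Instead it first Legendrian-isotopes $S$ to be \emph{transverse} to $B$ (Lemma~\ref{lem:making_transverse_to_binding}), so that $L := S \cap X$ is a properly embedded $(n-1)$-submanifold of the page. Now the relevant count is $\dim L + \dim\Delta = (n-1) + n < 2n$, which legitimately allows $L$ to be pushed off the core $\Delta$ inside $X$ by a Legendrian isotopy of $S$ (Lemma~\ref{lem:making_disjoint_from_core}). With $L \cap \Delta = \emptyset$, the \emph{forward} Liouville flow (promoted to the cut-off contact field coming from $Z_3$) carries $L$ into the tube $B \times D^2$, and a final explicit contact isotopy there (built from $Z_1=\partial x-\tfrac{y}{2}R_\beta$ and $Z_2=\partial y+\tfrac{x}{2}R_\beta$) slides $L$ across the binding to the other side. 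The missing ingredient in your outline is precisely this two-step handling near $B$: transversality to $B$ at the start, and an explicit contact push across $B$ at the end, in place of an impossible general-position disjunction from $B$.
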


In order to prove this theorem we will make use of contact vector fields. A vector field on a contact manifold is said to be \emph{contact} if its flow preserves the contact distrubution. The following fundamental lemma in contact geometry characterizes contact vector fields on a given contact manifold. More details can be found, for instance, in \cite{Geiges}.

\begin{lemma} \label{lem:contact_vector_field}
Let $(M,\emph{Ker}(\alpha))$ be any contact manifold and $R_{\alpha}$ denote the Reeb vector field of $\alpha$. Then there is a one-to-one correspondence between the set $\{Z \in \Gamma(M)\,|\, Z \, \emph{ is contact}\}$ of all contact vector fields on $M$ and the set $\{H : M \to\R\,|\, H\, \emph{ is smooth}\}$ of all smooth functions on $M$. The correspondence is given by $Z \to H_Z := \alpha(Z)$ ($H_Z$ is called the ``contact Hamiltonian'' of the contact vector field $Z$), and $H \to Z_H$ where $Z_H$ is the contact vector field uniquely determined by the equations  $\alpha(Z_H)=H$ and $\iota_{Z_H} d\alpha=dH(R_{\alpha})\alpha-dH$. \qed
\end{lemma}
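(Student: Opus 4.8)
The plan is to translate the geometric definition of a contact vector field into the pair of pointwise-linear equations appearing in the statement, and then to invoke the non-degeneracy of $d\alpha$ on the contact distribution to solve those equations uniquely. First I would recall that $Z$ is contact precisely when its flow $\psi_t$ preserves $\xi=\textrm{Ker}(\alpha)$, i.e.\ $\psi_t^*\alpha=\lambda_t\,\alpha$ for positive functions $\lambda_t$; differentiating at $t=0$ shows this is equivalent to $\mathcal{L}_Z\alpha=\mu\,\alpha$ for some smooth $\mu$. Applying Cartan's formula $\mathcal{L}_Z\alpha=\iota_Z d\alpha+d(\iota_Z\alpha)$ and setting $H:=\alpha(Z)$, the contact condition becomes $\iota_Z d\alpha+dH=\mu\,\alpha$.

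Next I would pin down $\mu$ in terms of $H$. Evaluating this identity on the Reeb field $R_\alpha$ and using $\alpha(R_\alpha)=1$ together with $\iota_{R_\alpha}d\alpha=0$ (so that $d\alpha(Z,R_\alpha)=0$), I obtain $\mu=dH(R_\alpha)$. Hence $Z$ is contact if and only if it satisfies the two equations $\alpha(Z)=H$ and $\iota_Z d\alpha=dH(R_\alpha)\,\alpha-dH$, which is exactly the characterization claimed. The forward assignment $Z\mapsto H_Z=\alpha(Z)$ is then immediate, and the computation above shows the two displayed equations are precisely the conditions that the candidate $Z_H$ must satisfy.

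For the inverse map and bijectivity I would argue pointwise using the splitting $TM=\mathbb{R}R_\alpha\oplus\xi$. Writing a candidate solution as $Z=a\,R_\alpha+W$ with $W\in\xi$, the equation $\alpha(Z)=H$ forces $a=H$, while $\iota_{R_\alpha}d\alpha=0$ reduces the second equation to $\iota_W d\alpha=dH(R_\alpha)\,\alpha-dH$. Restricting both sides to $\xi$, where $\alpha$ vanishes, this becomes $\iota_W(d\alpha|_\xi)=-dH|_\xi$, which has a unique solution $W\in\xi$ because $d\alpha|_\xi$ is a symplectic (hence non-degenerate) form on $\xi$. This is the one place where the contact condition $\alpha\wedge(d\alpha)^n\neq0$ is genuinely used, and it is the crux of the argument. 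I would then check that with this $W$ the full, not merely restricted, equation holds by evaluating both sides on $R_\alpha$, where each vanishes; since $TM=\mathbb{R}R_\alpha\oplus\xi$, agreement on both summands gives equality. This produces the unique field $Z_H=H\,R_\alpha+W$.

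Finally I would verify that the two assignments are mutually inverse: $H_{Z_H}=\alpha(Z_H)=H$ holds by construction, and $Z_{H_Z}=Z$ follows because any contact $Z$ satisfies the defining equations of $Z_{H_Z}$, so the uniqueness just established forces equality. The only remaining point is smoothness of $Z_H$ as $H$ varies, which is automatic since $W$ is recovered by inverting the smooth bundle isomorphism $\xi\to\xi^*$, $W\mapsto\iota_W(d\alpha|_\xi)$, applied to the smooth section $-dH|_\xi$. The main obstacle is therefore not computational but conceptual: isolating where non-degeneracy of $d\alpha$ on $\xi$ enters and confirming that the solution of the restricted equation automatically solves the full one.
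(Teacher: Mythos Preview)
Your argument is correct and is precisely the standard proof of this fact. Note, however, that the paper does not actually give a proof of this lemma: the statement is terminated with a \qed and the surrounding text refers the reader to \cite{Geiges} for details. So there is no ``paper's own proof'' to compare against; your write-up simply supplies the well-known argument that the authors chose to omit.
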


We note that a similar statement also holds between locally defined contact vector fields and locally defined smooth functons.

\begin{proof}[Proof of Theorem \ref{thm:Missing_a_page}]
For simplicity we will write $M(X,h)=Y$ and $\alpha_\beta=\alpha$.  We may assume that Legendrian submanifold $S=\phi(\Sigma^n) \subset (Y,\textrm{Ker}(\alpha))$ is connected since all the argument used below can be adapted (or generalized) to the case where $S$ has more than one connected component. So we have a Legendrian embedding of a compact connected manifold $\Sigma$ in the open book $\mathcal{OB}_{(X,h)}=(B,\varphi)$ on $Y$ corresponding to the abstract open book $(X,h)$. So we have
$$\phi: \Sigma^{n}\hookrightarrow Y=Y_1\cup Y_2=\Sigma(X,h)_\rho \cup (B \times D^2)$$
where $B=B \times \{0\} \subset B \times D^2 \subset Y$ is the binding, $Y_2:=B \times D^2$, and $Y_1:=\Sigma(X,h)_\rho$ is the mapping torus as above. We may consider $(Y_2,\textrm{Ker}(\alpha|_{Y_2}))$ as the contact manifold $$(B \times D^2,\textrm{Ker}(\beta+(x/2)\,dy -(y/2)\,dx))$$ where $(x,y)$ are the coordinates on the unit disk $D^2$.
Consider the following vector fields:
\begin{equation} \label{eqn:Contact_fields}
Z_1=\partial x-(y/2)R_{\beta},  \quad Z_2=\partial y +(x/2)R_{\beta},
\quad Z_{3}=\chi+ \theta \; \partial \theta, \quad Z_4=R_{\alpha}
\end{equation}
where $\chi$ is the $d\beta$-dual vector field of $\beta$, $R_{\alpha}$ (resp. $R_{\beta}$) denotes the Reeb vector field of $\alpha$ (resp. $\beta|_B$), and $\theta$ is the $S^1$-coordinate in the fibration $\varphi: Y \setminus B \to S^1$ determined by $\mathcal{OB}_{(X,h)}$. Here $\partial x=\partial / \partial x, \partial y=\partial / \partial y$, and so on.... Note that the first two are defined on $Y_2=B \times D^2$, and the third is defined on the contact manifold $(Y_1\setminus X,\textrm{Ker}(\alpha|_{Y_1\setminus X}))$ where $X$ is any fixed page of $\mathcal{OB}_{(X,h)}$. Here we note that $\alpha$ restricts to $\beta$ on every page of  $\mathcal{OB}_{(X,h)}$ (from its construction described in Remark \ref{rem:constructing_contact_form}) and $Y_1\setminus X=X \times \R$, and so, in particular, we may also write $Z_{3}=\chi+ z \; \partial z$ where $z$ is the $\R$-coordinate on $X \times \R$. It is easy to check that:
\begin{equation*} \mathcal{L}_{Z_{i}}\alpha = \left\{
\begin{array}{rl} 0 & \text{if } i=1,2,4\\
\alpha & \text{if } i=3 \end{array} \right.
\end{equation*}
So they are all contact vector fields on the regions where they are defined. In fact, if $H_i$ denotes the contact Hamiltonian function corresponding to $Z_i$ as in Lemma \ref{lem:contact_vector_field}, then we have
 $$H_1=-y, \quad H_2=x, \quad H_3=\theta, \quad H_4\equiv1.$$

We first see that one can make $S$ transverse to the binding $B$:

\begin{lemma} \label{lem:making_transverse_to_binding}
In any pre-given $\epsilon$-neighbourhood of $S$, one can Legendrian isotope $S$ (in a small neighbourhood of the binding $B$) so that it becomes everywhere transverse to $B$.
\end{lemma}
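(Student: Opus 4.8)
The plan is to use the contact vector fields $Z_1$ and $Z_2$ from \eqref{eqn:Contact_fields}, which are defined on a neighborhood $B\times D^2$ of the binding and have linearly independent projections onto the $D^2$-factor away from $B$. First I would restrict attention to the portion $S\cap (B\times D^2_\delta)$ of $S$ lying in a small tubular neighborhood $B\times D^2_\delta$ of the binding, where $D^2_\delta$ is the disk of radius $\delta$; outside this neighborhood nothing needs to be changed. Cutting off $Z_1$ and $Z_2$ by a function that is $1$ on $B\times D^2_{\delta/2}$ and supported in $B\times D^2_\delta$ produces globally defined contact vector fields on $Y$ (their contact Hamiltonians $-y$ and $x$ are multiplied by the same cut-off, and Lemma \ref{lem:contact_vector_field} guarantees the result is still contact). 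Their time-$t$ flows are then contactomorphisms of $Y$ supported near $B$, each of which moves $S$ through a Legendrian isotopy.

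Next I would observe that the combined flow generated by $Z_1$ and $Z_2$ (near $B$, before cutting off) translates in the $D^2$-direction: writing a point as $(b,x,y)\in B\times D^2$, the flow of a linear combination $a_1 Z_1 + a_2 Z_2$ moves the $(x,y)$-coordinate by $(a_1,a_2)$ to first order (up to the Reeb-direction correction, which is tangent to $B\times\{0\}$-levels and irrelevant to transversality with $B=B\times\{0\}$). So the family of diffeomorphisms obtained by flowing along these fields realizes, near $B$, every translation of the $D^2$-factor by a small vector $v\in D^2_{\delta/2}$. Transversality of $S$ to $B=B\times\{0\}$ inside $B\times D^2_\delta$ is exactly the condition that the restriction to $S$ of the projection $\pi: B\times D^2_\delta \to D^2_\delta$ has $0$ as a regular value. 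By Sard's theorem applied to $\pi|_{S\cap(B\times D^2_\delta)}$ (a smooth map from an $n$-manifold-with-boundary to $D^2$), almost every $v\in D^2_{\delta/2}$ is a regular value; pick such a $v$ arbitrarily close to $0$. Translating $S$ by $-v$ via the cut-off flow of the $Z_i$'s then makes $v$ get sent to $0$, so the new Legendrian $S'$ is transverse to $B$, and since $v$ can be chosen arbitrarily small the whole isotopy stays inside the prescribed $\epsilon$-neighborhood of $S$.

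Finally I would check that cutting $Z_1,Z_2$ off does not destroy the argument: on $B\times D^2_{\delta/2}$ the cut-off fields agree with the uncut ones, so the flow there is genuinely the translation described above; and $S'$ being transverse to $B$ is a local condition near $B\times\{0\}\subset B\times D^2_{\delta/2}$. One should also note that transversality to $B$ of a compact Legendrian is an open condition, and Legendrian isotopy is a local-at-each-time perturbation, so nothing about compactness or connectedness of $\Sigma$ causes trouble; if $S$ has several components one applies the argument componentwise, or simultaneously with the same small $v$.

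The main obstacle, or rather the main point requiring care, is confirming that the cut-off vector fields remain \emph{contact} (which is automatic by Lemma \ref{lem:contact_vector_field} once we note that multiplying a contact Hamiltonian by a smooth function yields another contact Hamiltonian) and that their flows are complete enough to be used as ambient contactomorphisms — here completeness is free because they are compactly supported in $Y$. Everything else is a routine transversality (Sard) argument in the disk coordinates.
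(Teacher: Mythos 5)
Your proposal is correct and is essentially the paper's argument: both make $S$ transverse to $B$ by locating a regular value of the projection of $S$ to the $D^2$-factor arbitrarily close to the origin and then flowing $S$ along a combination $a_1Z_1+a_2Z_2$ of the contact fields, cut off through its contact Hamiltonian $a_1H_1+a_2H_2$, so that the transverse slice over that value is carried onto $B=B\times\{(0,0)\}$ while the isotopy stays in the prescribed $\epsilon$-neighbourhood. The only difference is how the regular value is produced — you invoke Sard's theorem directly, whereas the paper first chooses a constant direction transverse to $S$ along the non-transverse locus of $S\cap B$ and picks a regular value on the segment in that direction — but the translation-by-cut-off-contact-flow mechanism is the same.
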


\begin{proof}
Let $N_\epsilon$ be any $\epsilon$-neighbourhood of $S$. We will use the fact that Legendrian (more generally, isotropic) submanifolds stays Legendrian (isotropic) under the flows of contact vector fields. Note that for any constants $a_1, a_2 \in \R$ the vector field $Z=a_1Z_1+a_2Z_2$ is contact with the
contact Hamiltonian $H_Z=a_1H_1+a_2H_2$. If $S$ intersects $B$ transversally, then there is nothing to prove. If not, let $K \subset S \cap (B \times \{(0,0)\})$ be the region where they don't intersect transversally. Consider $Z_1|_{B \times \{(0,0)\}}=\partial x,  Z_2|_{B \times \{(0,0)\}}=\partial y$ on $B=B \times \{(0,0)\}$. For any $p \in K$, the tangent space $$(\{\textbf{0}\}\times TD^2)|_{p} \subset (TB\times TD^2)|_{p}$$ does not lie in $TS|_{p}$ (otherwise $S$ and $B$ would intersect transversally at $p \in K$). Therefore, there exists a vector $v=a_1\partial x+a_2 \partial y \in (\{\textbf{0}\}\times TD^2)|_{B \times \{(0,0)\}}$ (for some constants $a_1, a_2 \in \R$) which is everywhere transverse to $S \cap B$. We consider $Z=a_1Z_1+a_2Z_2$ as the smooth extension of $v$ to the whole $B \times D^2$. Note that $Z$ will stay transverse to $S$ in a small neighbourhood $N_\delta:=B \times \{(x,y) \in D^2\, |\, x^2+y^2 < \delta\}$ for some $\delta >0$. Let $N_SK \subset N_\delta$ be a small neighbourhood of $K$ in $S$. Now
choose a regular value  $q\in \{(x,y) \in D^2\, |\, x^2+y^2 < \delta\} \subset D^2$ of the composition
$$\Sigma^n \supset U\stackrel{\phi }{\longrightarrow} B \times D^2 \stackrel{\pi_{2}}{\longrightarrow} D^2,\;\;\text{where} \;\phi (\Sigma)=S$$
where $\pi_{2}$ is the projection and $U=\phi^{-1}(B \times \textrm{int} (D^2))$, such that $q \in \pi_2(N_SK)$ lies on the line segment joining $(0,0)$ and $(a_1,a_2)$, and the above composition has no critical value other than $(0,0)$ on the line segment, say $l_q$, joining $(0,0)$ and $q$. Note that, by construction, $S$ intersects the identical copy $B_q=\pi_2^{-1}(q)$ of $B$ transversally, and $Z$ is everywhere transverse to $\pi_2^{-1}(l_q)$. Let $\mu: B \times D^2 \to \R$ be a cut-off function such that $\mu \equiv 1$ on a neigbourhood $N \subset N_\epsilon$ of $\pi_2^{-1}(l_q)$ in $N_\delta$, and $\mu \equiv 0$ on outside of a slightly larger neigbourhood $ N_\epsilon \subset N'$ (see Figure \ref{fig:making_transverse_to_binding}).\\

\begin{figure}[ht]
\begin{center}
\includegraphics{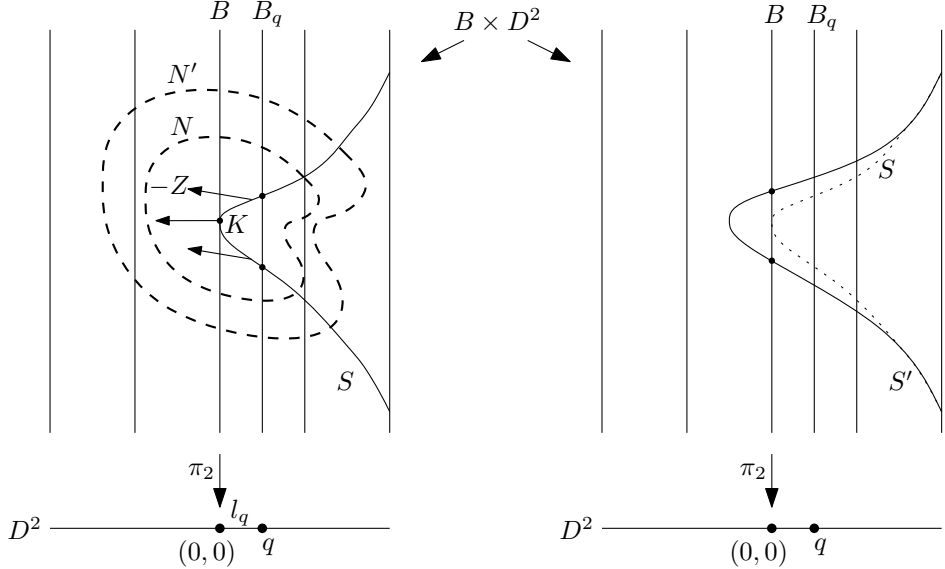}
\caption{Isotoping $S$ to another Legendrian submanifold $S'$ which is transverse to the binding $B=B \times \{(0,0)\}$.}
\label{fig:making_transverse_to_binding}
\end{center}
\end{figure}

Next consider the contact vector field $Z_{\mu}$ corresponding to the contact Hamiltonian $H_{\mu}:=\mu H_Z$. By the choice of $\mu$, $Z_{\mu}$ agrees with $Z$ on $N$ and it is identically zero outside $N'$. Using the backward flow $Z_\mu^{-t}$ of $Z_\mu$ we define the following $1$-parameter smooth family:
$$\Phi_t:\Sigma^n \longrightarrow Y, \quad \Phi_t(p)=Z_\mu^{-t}(\phi(p)), \quad t \in [0,T]$$ where $T$ is the time elapsed during the points of $S \cap B_q$ are moved to their final images in $B=B \times \{(0,0)\}$ under the backward flow of $Z_\mu$. (Note that all the points of $S \cap B_q \subset N$ reach $B$ at the same time because the ``horizontal'' components of $Z_\mu|_N=Z|_N$ is defined by the constants $a_1,a_2$.) Observe that $\Phi_0=\phi$, for each $t \in [0,T]$ we have $\Phi_t=\phi$ outside $N'$ and $\Phi_t(\Sigma^n)$ is Legendrian, and $S':=\Phi_1(\Sigma^n)\subset Y$ is everywhere transverse to the binding $B$ as depicted in Figure \ref{fig:making_transverse_to_binding}. Finally, by choosing $\delta$ small enough, one can guarantee that the isotopy $\Phi_t$ stays in the pre-given $\epsilon$-neighbourhood $N_\epsilon$ of $S$.
\end{proof}

By the above lemma we may assume that the Legendrian submanifold $S=\phi(\Sigma^n)$ is transverse to the binding $B$. Next, by picking a regular value $p\in S^1$ for the projection $\pi_{2}:\phi(\Sigma^n) \cap \Sigma(X,h)_\rho \to S^1$  we can assume that for the page $\varphi^{-1}(p) \approx X$ of $\mathcal{OB}_{(X,h)}$, the intersection 
$$L:=\phi (\Sigma^n) \cap \varphi^{-1}(p)= S\cap X $$ is a  properly imbedded  $n-1$ dimensional submanifold $(L, \partial L)\subset (X, B=\partial X)$ meeting the binding along an $n-2$ dimensional submanifold $\partial L$ (for simplicity we will write $X$ for $\varphi^{-1}(p) $). 

\begin{lemma}\label{core} \label{lem:making_disjoint_from_core}
In any pre-given $\epsilon$-neighbourhood of $S$, one can Legendrian isotope $S$ (in a small neighbourhood of the page $X$) so that $L=S \cap X$  becomes disjoint from $\Delta=\emph{Core}(X,\beta)$.
\end{lemma}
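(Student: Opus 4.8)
The strategy is to use the contact vector field $Z_3 = \chi + z\,\partial z$ on $Y_1 \setminus X = X \times \R$, whose backward flow $Z_3^{-t}$ contracts the $z$-coordinate towards $0$ while simultaneously running the negative Liouville flow of $\chi$ on the $X$-factor. Recall from the setup that, having fixed the page $X = \varphi^{-1}(p)$, the intersection $L = S \cap X$ is a properly embedded $(n-1)$-submanifold of $X$ with $\partial L \subset B = \partial X$, and that $S$ is already transverse to $B$. The key facts I would invoke are: (a) the negative Liouville flow $\chi^{-t}$ of the Weinstein vector field on $X$ converges (in the appropriate exhausting sense) onto $\Delta = \mathrm{Core}(X,\beta)$, which by Theorem \ref{thm:Top_Charac_of_Weinstein_manifolds} is (homotopy equivalent to) a CW-complex of dimension at most $n$; and (b) a properly embedded $(n-1)$-manifold $L$ can be made disjoint from an $n$-complex in a $2n$-manifold by a generic perturbation, since $(n-1) + n < 2n$.

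The steps, in order, are as follows. First I would restrict attention to a collar neighbourhood $X \times (-\eta,\eta) \subset X \times \R = Y_1\setminus X$ of the page and note that $Z_3$ is a genuine contact vector field there (with contact Hamiltonian $H_3 = z$, which vanishes exactly on the page — a harmless technical point I'd handle by instead flowing along $Z_3$ restricted to a region bounded away from $X$, or by a preliminary small push of $S$ off $X$ using $Z_4 = R_\alpha$). Second, I would observe that under the backward flow $Z_3^{-t}$, a point $(q,z) \in X\times\R$ with $z\neq 0$ moves to $(\chi^{-t}(q), e^{-t}z)$; in particular the image stays off the page for all $t$, and its $X$-component is dragged by the negative Liouville flow. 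Applying this to the portions of $S$ lying on either side of $X$ drags $L = S\cap X$ — more precisely, drags the nearby slices $S \cap (X\times\{z\})$ for small $z$ — toward $\Delta$. But since $L$ itself lies on the page where $H_3 = 0$, I instead argue that after flowing, the new page-intersection $L' = S'\cap X$ is the $\chi^{-T}$-image of (a submanifold $C^1$-close to) $L$, which for large $T$ is pushed into an arbitrarily small neighbourhood $\mathcal{N}$ of $\Delta$. Third — and this is where the dimension count enters — inside $\mathcal{N}$, which deformation-retracts onto the $\leq n$-dimensional complex $\Delta$, a generic ambient isotopy of $X$ (supported near $\mathcal{N}$, fixing a neighbourhood of $B$) pushes the $(n-1)$-dimensional $L'$ off $\Delta$: by general position $L'$ can be made disjoint from the $\leq n$-skeleton since $\dim L' + \dim\Delta = 2n-1 < \dim X = 2n$. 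Finally I would realize this last ambient isotopy of the page as the restriction of a Legendrian isotopy of $S'$ in $Y$, either by extending it via a contact Hamiltonian supported near the page (using the local correspondence of Lemma \ref{lem:contact_vector_field}) or by noting that isotopies of $S$ supported near a page that move $L'$ within $X$ lift to contact isotopies since $\mathrm{Ker}(\alpha)$ restricts to the symplectic page.

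The main obstacle I expect is the interplay between the two regimes: the Liouville flow $\chi^{-t}$ on the open manifold $X$ (or rather on the interior, since the domain is compact with convex boundary, so $\chi$ points outward along $B$ and its \emph{backward} flow is the one that contracts toward the core — I must be careful that $L$'s boundary sits on $B$, where the flow is not inward-pointing in the completion, so the argument should be run on the Weinstein domain $X$ directly, using that $Z_3^{-t}$ keeps everything inside $X\times\R$ near the original collar) versus the requirement that the whole motion be realized by the global contact vector field $Z_3$ on $Y_1\setminus X$ without the points ever returning to the page. Concretely, one must check that flowing for a large but finite time $T$ carries the relevant part of $S$ into $X\times((-\eta,0)\cup(0,\eta))$ with $X$-component inside any prescribed neighbourhood of $\Delta$, and that the resulting $L' = S'\cap X$ — obtained as a limit/continuation across the page — is genuinely embedded and $C^0$-close to $\chi^{-T}(L)$. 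Establishing that this $L'$ lies in a small neighbourhood of $\Delta$, uniformly in the boundary behaviour near $B$, together with the subsequent transversality perturbation being expressible as a Legendrian isotopy (not merely a smooth one), is the crux; everything else is a routine cut-off-function and general-position argument of the kind already carried out in the proof of Lemma \ref{lem:making_transverse_to_binding}.
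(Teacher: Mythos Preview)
Your proposal contains a genuine gap, and it is precisely at the step you flag as ``the crux.'' Let me separate the two issues.

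\textbf{The detour via $Z_3$ is misguided.} Your steps 1--2 flow $S$ by the backward flow $Z_3^{-t}$, sending $(q,z)\mapsto(\chi^{-t}(q),e^{-t}z)$. On the page $z=0$ this is just the contracting Liouville flow $\chi^{-t}$, which drags $L$ \emph{toward} $\Delta$, not away from it. You acknowledge this and propose to then perturb $L'$ off $\Delta$ by general position --- but the dimension count $(n-1)+n<2n$ applies equally well to the original $L$, so contracting first buys nothing. The forward flow of $Z_3$ does not help either: points of $L\cap\Delta$ lying on the stable manifold of a critical point of $\Psi$ do not escape $\Delta$ under $\chi^{t}$. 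In the paper, $Z_3$ is used only \emph{after} this lemma (in the remainder of the proof of Theorem~\ref{thm:Missing_a_page}), with the \emph{forward} flow, to push an $L$ that is already disjoint from $\Delta$ out toward the binding. You have imported the right vector field but applied it at the wrong stage and in the wrong direction.

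\textbf{The lifting step is where the work actually lies.} Your step 3 (``realize this last ambient isotopy of the page as the restriction of a Legendrian isotopy of $S$'') is the entire content of the lemma, and your justification --- either via an unspecified contact Hamiltonian, or by the remark that $\ker(\alpha)$ restricts to the symplectic page --- does not do the job. An arbitrary smooth isotopy of $L$ inside $X$ does not lift to a contact isotopy of $Y$; contact vector fields restricted to the page are constrained (on $X\times\R$ with $\alpha=\beta+dz$, if the contact Hamiltonian $H$ is $z$-independent then the restriction to the page is the $d\beta$-Hamiltonian vector field of $H$). The paper handles this by passing to the $1$-jet model $T^*\Sigma^n\times\R$ via the Legendrian Neighbourhood Theorem, where there are $2n+1$ linearly independent contact vector fields $Z'_1,\ldots,Z'_{2n+1}$ spanning the tangent space at every point; it then writes the required transverse push as a linear combination of these, cuts off, and runs a careful cell-by-cell argument over a finite cover of each component of $L\cap\Delta$ (this cell decomposition is needed because $\Delta$ is a CW-complex, not a manifold, so one cannot simply invoke smooth transversality). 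Your proposal skips all of this machinery. A cleaner route than the paper's would be to observe that $L$ is isotropic in $(X,d\beta)$, argue that a generic compactly supported Hamiltonian isotopy displaces $L$ from $\Delta$, and lift that Hamiltonian isotopy to a contact isotopy of $X\times\R$ --- but you would still need to supply the transversality argument against a stratified $\Delta$, and you do not.
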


\begin{proof}
Since $X^{2n}$ is Weinstein (by assumption), $\textrm{dim}(\Delta)=n$ by Theorem \ref{thm:Top_Charac_of_Weinstein_manifolds}. Also  we have $\textrm{dim}(\Delta)+\textrm{dim}(L^{n-1})=2n-1<2n=\textrm{dim}(X)$. Hence, by the general position in $X$, we can (topologically) isotope $L^{n-1}$ to a nearby copy which is disjoint from $\Delta$. This means that there exists a vector field $Z$ on $X$ which is transverse to both $L$ and $\Delta$ along their intersection $L \cap \Delta$. In what follows, using contact vector fields which are compactly supported near $L \cap \Delta $ (and which are generated from $Z$), we will construct an isotopy which transforms $L$ to some nearby copy $L'$ (disjoint from $\Delta$), and recognize this isotopy (in $X$) as the restriction of a local Legendrian isotopy (in $Y$) moving $S$ to another Legendrian submanifold, say $S'$.\\

Recall that there is a canonical contact model (Legendrian Neighbourhood Theorem) for the tubular neighbourhood $N_Y(S)$ of  $S$ in $Y$. That is, there exists a contactomorphism
$$\Upsilon:(T^*\Sigma^n \times \R, \textrm{Ker}(\textbf{q}d \textbf{p}+dz)) \longrightarrow (N_Y(S),\textrm{Ker}(\bar{\beta}^{st}|_{N_Y(S)}))$$
from the $1$-jet bundle $(T^*\Sigma^n \times \R, \textrm{Ker}(\textbf{q}d \textbf{p}+dz))$ where $\textbf{p}=(p_1,...,p_n), \textbf{q}=(q_1,...,q_n)$ are the standard coordinates on $T^*\Sigma^n$ and $z$ is the real coordinate. (Here $\Upsilon$ maps the zero section $\Sigma_0=\{\textbf{q}=\textbf{0}\} \times \{0\} \subset T^*\Sigma^n \times \R$ onto $S$.) Observe that, on $T^*\Sigma^n \times \R$, there are $2n+1$ linearly independent contact vector fields:
\begin{equation} \label{eqn:Contact_vector_fields_in_1_jet}
Z'_1=\partial p_1,...,Z'_n=\partial p_n, \quad Z'_{n+1}=\partial q_1-p_1 \partial z,...,Z'_{2n}=\partial q_n-p_n \partial z, \quad Z'_{2n+1}=\partial z
\end{equation}
The corresponding contact Hamiltonian functions (as in Lemma \ref{lem:contact_vector_field}), respectively, are
\begin{equation} \label{eqn:Contact_Hamiltonians_in_1_jet}
H'_1=q_1,...,H'_n=q_n, \quad H'_{n+1}=-p_1,...,H'_{2n}=-p_n, \quad H'_{2n+1}=1.
\end{equation}
We will use these contact vector fields for local Legendrian isotopies in $N_Y(S) \cong T^*\Sigma^n \times \R$ that we need for our purpose.\\

Let $L \cap \Delta=\sqcup_{i=1}^s L_i$ where $L_i$'s are (disjoint) connected components. Note that $L$ is compact as both $S=\phi(\Sigma^n)$ and $X$ are compact. Moreover, the core $\Delta$ is compact,  and so $L \cap \Delta$ is also compact from which we conclude that each $L_i $ is a compact CW-complex of finite type. Denote by $L_i^{K}$ the $K$-skeleton of $L_i$ for $K=0,1,...,n-1$. In particular, we have $L_i=L_i^{n-1}$ (Figure \ref{fig:making_disjoint_from_core}-a). Using the isotopies mentioned above, we will first make the closure of every $(n-1)$-cell in each $L_i$ disjoint from $\Delta$, and then do the same for $(n-2)$-cells, and so on... \\

Let $\{E_{i,1}^{K}, ..., E_{i,l_i^{K}}^{K}\}$ be the set of all $K$-cells in $L_i^{K}$. For any $1\leq j \leq l_i^{K}$, consider the following open cover for the closure $\overline{E_{i,j}^{K}}$ (recall the vector field $Z$ in the proof of Lemma~\ref{core}):
$$\mathcal{U}_{i,j}^{K}:=\{U_{x} \; |\; x \in E_{i,j}^{K}, U_{x} \textrm{ is a nbhd of } x \textrm{ in } \overline{E_{i,j}^{K}} \textrm{ s.t. } Z(x) \pitchfork \overline{U_{x}}\}. $$
Clearly, $\mathcal{U}_{i,j}^{K}$ covers $\overline{E_{i,j}^{K}}$ which is compact.
So, $\mathcal{U}_ {i,j}^{K}$ has a finite subcover $$\{U_{x^{K}_{i,j,1}}, U_{x^{K}_{i,j,2}},..., U_{x^{K}_{i,j,m_{i,j}^{K}}} \}$$
for some finite number of points $\{x^{K}_{i,j,1}, x^{K}_{i,j,2}, ..., x^{K}_{i,j,m_{i,j}^{K}}\}$ in $E_{i,j}^{K}$. We label these points in such a way that the neighbourhood of any point has nonempty intersection with the union of the neighbourhoods of the preceding points in the list as depicted in Figure \ref{fig:making_disjoint_from_core}-b (for the case $K=n-1$).\\

\begin{figure}[ht]
\begin{center}
\includegraphics{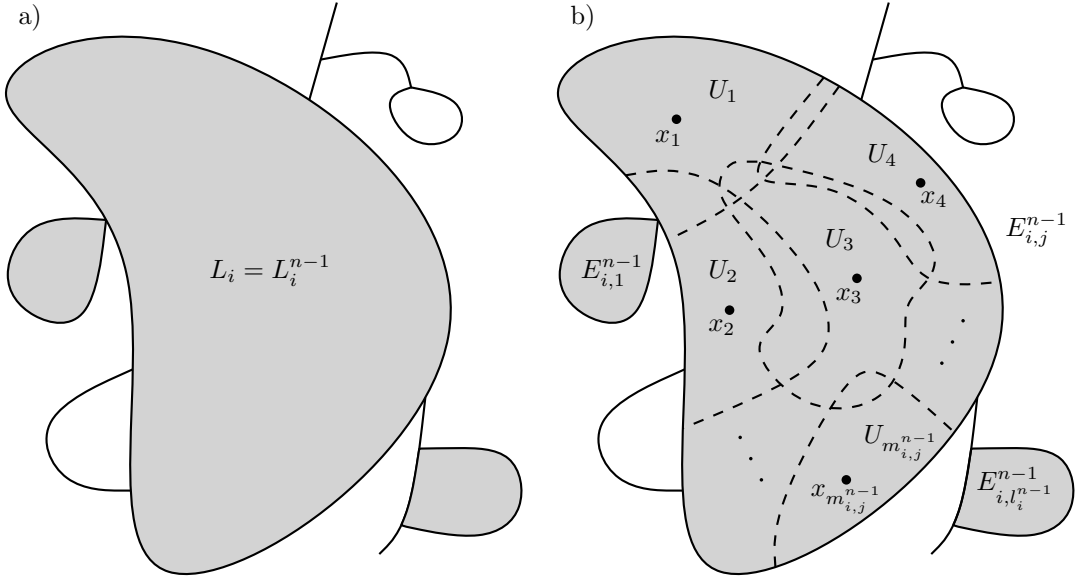}
\caption{a) A typical connected component $L_i=L_i^{n-1}$ of $L \cap \Delta$, b) Finite subcover of $\mathcal{U}_{i,j}^{n-1}$ (where we write $x^{n-1}_{i,j,1}=x_1,...,x^{n-1}_{i,j,m_{i,j}^{n-1}}=x_{m_{i,j}^{n-1}},$ and similarly $U_{x^{n-1}_{i,j,1}}=U_1,..., U_{x^{n-1}_{i,j,m_{i,j}^{n-1}}}=U_{m_{i,j}^{n-1}}$ for simplicity).}
\label{fig:making_disjoint_from_core}
\end{center}
\end{figure}

We will first make $\overline{U_{x^{n-1}_{i,j,1}}}$ disjoint from $\Delta$. From the definition of $\mathcal{U}_{i,j}^{n-1}$, we have a constant vector field $Z(x^{n-1}_{i,j,1}) \in TX|_{\overline{U_{x^{n-1}_{i,j,1}}}}$ which is everywhere transverse to $\overline{U_{x^{n-1}_{i,j,1}}}$. (Here at every point of $\overline{U_{x^{n-1}_{i,j,1}}}$, we have the same vector $Z(x^{n-1}_{i,j,1}) $.) Observe that, since $\{Z'_i\}_{i=1}^{2n+1}$ is a linearly independent set, the pull-back vector field $\Upsilon^*(Z(x^{n-1}_{i,j,1}))$ can be written as the unique linear combination
$$\Upsilon^*(Z(x^{n-1}_{i,j,1}))=c_1Z'_1+\cdots c_{2n+1}Z'_{2n+1}$$
for some unique constants $c_1,...,c_{2n+1} \in \R$. Since these constants depend on the point $x^{n-1}_{i,j,1}$, we set the notation $$\textbf{Z}_{x^{n-1}_{i,j,1}}:=c_1Z'_1+\cdots c_{2n+1}Z'_{2n+1}.$$ Let us write $V_{x^{n-1}_{i,j,1}}$ for the pre-image $\Upsilon^{-1}(U_{x^{n-1}_{i,j,1}})$.  Observe that $\textbf{Z}_{x^{n-1}_{i,j,1}}$ is a contact vector field on $T^*\Sigma^n \times \R$ with the contact Hamiltonian  $$\textbf{H}_{x^{n-1}_{i,j,1}}:=c_1H'_1+\cdots c_{2n+1}H'_{2n+1},$$ and also that it is everywhere transverse to $\overline{V_{x^{n-1}_{i,j,1}}}=\Upsilon^{-1}(\overline{U_{x^{n-1}_{i,j,1}}}) \subset \Upsilon^{-1}(L) \subset \Sigma_0$.

Denote by $\widetilde{V_{x^{n-1}_{i,j,1}}}$ a small neighbourhood of $\overline{V_{x^{n-1}_{i,j,1}}}$ in $\Upsilon^{-1}(N_Y(S) \cap X) \subset T^*\Sigma^n \times \R$. Let $\mu_{x^{n-1}_{i,j,1}}:\Upsilon^{-1}(N_Y(S) \cap X) \to \R$ be a smooth cut-off function such that $\mu_{x^{n-1}_{i,j,1}}\equiv 1$ near $\overline{V_{x^{n-1}_{i,j,1}}}$,
and $\mu_{x^{n-1}_{i,j,1}}\equiv 0$ on the complement $\Upsilon^{-1}(N_Y(S) \cap X) \setminus \widetilde{V_{x^{n-1}_{i,j,1}}}$. Now consider the contact vector field $\textbf{Z}_{\mu_{x^{n-1}_{i,j,1}}}$ whose corresponding contact Hamiltonian is equal to $\mu_{x^{n-1}_{i,j,1}} \textbf{H}_{x^{n-1}_{i,j,1}}$. By the choice of the cut-off function $\textbf{Z}_{\mu_{x^{n-1}_{i,j,1}}}|_{\overline{V_{x^{n-1}_{i,j,1}}}}=\textbf{Z}_{x^{n-1}_{i,j,1}}$, and so $\textbf{Z}_{\mu_{x^{n-1}_{i,j,1}}}$ is also transverse to $\overline{V_{x^{n-1}_{i,j,1}}}$. Using the flow $\textbf{Z}^t_{\mu_{x^{n-1}_{i,j,1}}}$ we isotope $\overline{V_{x^{n-1}_{i,j,1}}}$ to its nearby copy $\textbf{Z}^t_{\mu_{x^{n-1}_{i,j,1}}}(\overline{V_{x^{n-1}_{i,j,1}}})$ for some fixed time $t$. Note that pushing along a transverse contact vector field implies that $\textbf{Z}^t_{\mu_{x^{n-1}_{i,j,1}}}(\overline{V_{x^{n-1}_{i,j,1}}})$ is disjoint from $\Delta$, and is still isotropic in $(T^*\Sigma^n \times \R, \textrm{Ker}(\textbf{q}d \textbf{p}+dz))$. Indeed, since the transversality is an open condition, we know that the isotropic image $\textbf{Z}^t_{\mu_{x^{n-1}_{i,j,1}}}(\widehat{V_{x^{n-1}_{i,j,1}}})$ is  disjoint from $\Delta$ where $\widehat{V_{x^{n-1}_{i,j,1}}} \supset \overline{V_{x^{n-1}_{i,j,1}}}$ is a neighbourhood of $\overline{V_{x^{n-1}_{i,j,1}}}$ in $L_i$ such that 
$$\overline{V_{x^{n-1}_{i,j,1}}}\subset \widehat{V_{x^{n-1}_{i,j,1}}} \subset \widetilde{V_{x^{n-1}_{i,j,1}}} \cap L_i.$$

Similarly, we can make the closure of all the other open sets in the above finite subcover of $\mathcal{U}_{i,j}^{n-1}$ disjoint  from $\Delta$ (this will isotope the whole closed $(n-1)$-cell $\overline{E_{i,j}^{n-1}}$ to some isotropic copy which is disjoint from $\Delta$). However, for each such closure, the choice of how much we push it (using the flow of the corresponding contact vector field) needs a litle bit of more care: Let us discuss this in an inductive way: Suppose that we have already isotoped the union $$\overline{V_{x^{n-1}_{i,j,1}}} \cup \overline{V_{x^{n-1}_{i,j,2}}} \cup \cdots \cup \overline{V_{x^{n-1}_{i,j,k}}}, \quad \textrm{for some } k \in \{1,...,m_{i,j}^{n-1}-1\}$$
along the contact vector fields $\{\textbf{Z}_{\mu_{x^{n-1}_{i,j,m}}}\}_{m=1}^k$ (where the smooth cut-off functions $\mu_{x^{n-1}_{i,j,m}}:\Upsilon^{-1}(N_Y(S) \cap X) \to \R$ are constructed in the same way as above) so that the image of the union $$\bigcup_{m=1}^k \widehat{V_{x^{n-1}_{i,j,m}}}$$ is isotropic in $(T^*\Sigma^n \times \R, \textrm{Ker}(\textbf{q}d \textbf{p}+dz))$ and is  disjoint from $\Delta$ where $\widehat{V_{x^{n-1}_{i,j,m}}} \subset \widetilde{V_{x^{n-1}_{i,j,m}}} \cap L_i$ is a small neighbourhood of $\overline{V_{x^{n-1}_{i,j,m}}}$ in $L_i$. Now we would like to push (i.e., isotope) $\overline{V_{x^{n-1}_{i,j,k+1}}}$ using $\textbf{Z}_{\mu_{x^{n-1}_{i,j,k+1}}}$. Observe that the region $$\overline{V_{x^{n-1}_{i,j,k+1}}} \cap (\widehat{V_{x^{n-1}_{i,j,1}}} \cup \cdots \cup \widehat{V_{x^{n-1}_{i,j,k}}})$$
has been already made disjoint from $\Delta$, and also that $\textbf{Z}_{\mu_{x^{n-1}_{i,j,k+1}}}$ might be tangent to the image of this region at some points, or even its flow might transform some points in the region back to $\Delta$ (if we let them flow too much). On the other hand, since transversality is an open condition there exists a codim-$0$ subset $\overline{\overline{V_{x^{n-1}_{i,j,k+1}}}} \subset \overline{V_{x^{n-1}_{i,j,k+1}}}$ with a codim-$0$ nonempty intersection $$\overline{\overline{V_{x^{n-1}_{i,j,k+1}}}} \cap (\widehat{V_{x^{n-1}_{i,j,1}}} \cup \cdots \cup \widehat{V_{x^{n-1}_{i,j,k}}})$$ to which $\textbf{Z}_{\mu_{x^{n-1}_{i,j,k+1}}}$ is everywhere transverse. Therefore, we can make the union $$\overline{V_{x^{n-1}_{i,j,1}}} \cup \cdots \cup \overline{V_{x^{n-1}_{i,j,k+1}}}$$ disjoint from $\Delta$ by pushing (in an appropriate amount) along $\textbf{Z}_{\mu_{x^{n-1}_{i,j,k+1}}}$ as shown in Figure \ref{fig:pushing_keeping_disjoint}.

\begin{figure}[ht]
\begin{center}
\includegraphics{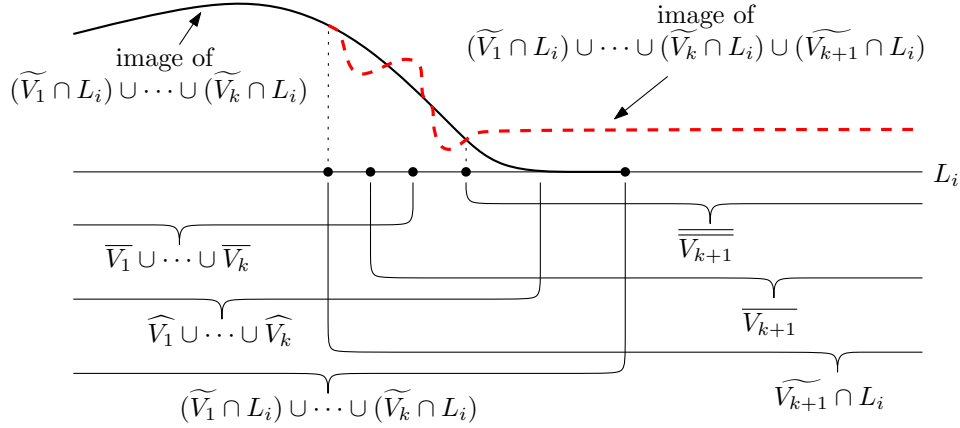}
\caption{Making $\overline{V_{x^{n-1}_{i,j,k+1}}}$ disjoint from $\Delta$ (where for each $m=1,...,k+1$ we write $\overline{V_{x^{n-1}_{i,j,m}}}=\overline{V_m}$, $\widehat{V_{x^{n-1}_{i,j,m}}}=\widehat{V_m}$, $\widetilde{V_{x^{n-1}_{i,j,m}}}=\widetilde{V_m}$, and also $\overline{\overline{V_{x^{n-1}_{i,j,k+1}}}}=\overline{\overline{V_{k+1}}}$ for simplicity).}
\label{fig:pushing_keeping_disjoint}
\end{center}
\end{figure}

Repeating the above process we can make the union $\overline{E_{i,1}^{n-1}} \cup \cdots \cup \overline{E_{i,l_i^{n-1}}^{n-1}}$ of the closures of all $(n-1)$-cells disjoint from $\Delta$ by pushing to a nearby isotropic copy in $\Upsilon^{-1}(N_Y(S) \cap X)$. Note that for any particular closure in the union if some part of it has been already pushed (this might happen if it has a common boundary part with another $(n-1)$-cell which has been pushed earlier), then we isotope it in an appropriate amount (as in the above discussion) so that previously pushed regions in the cell would not be moved back to $\Delta$.\\

Similarly, we can deal with the union $\{\overline{E_{i,1}^{K}} \cup \cdots \cup \overline{E_{i,l_i^{K}}^{K}}\}$ of all closed $K$-cells in $L_i$ (for $0 \leq K \leq n-2$) in the same way (under the assumption that all closed $(K+1)$-cells have been already made disjoint from $\Delta$. Note that we do not need to push any $K$-cell which appears as a part of the boundary of some $(K+1)$-cell(s) because such $K$-cells have been already made disjoint from $\Delta$ in the previous step. This process deforms the connected component $L_i$ to its image $L_i'$  which is isotropic and disjoint from $\Delta$. Repeating this for each connected component, we conclude that there exists an isotopy of the embeddings in $X$ from $L$ to its nearby isotropic copy $L'$ which is disjoint from $\Delta$. This isotopy is generated by contact vector fields $\textbf{Z}_{\mu_{x^{K}_{i,j,k}}}$ and compactly supported in the neighbourhood $$\widetilde{V}:=\bigcup_{0 \leq i \leq s} \quad \bigcup_{0 \leq K \leq n-1} \quad \bigcup_{0 \leq j \leq l_i^{K}} \quad \bigcup_{0 \leq k \leq m_{i,j}^{K}} \widetilde{V_{x^{K}_{i,j,k}}} \subset \Upsilon^{-1}(N_Y(S) \cap X)$$
Next, we want to extend this local isotopy of $L$ in $\Upsilon^{-1}(N_Y(S) \cap X) \times \{0\}$ to a local Legendrian isotopy of $S$ compactly supported in $\widetilde{V} \times [-1,1] \subset \Upsilon^{-1}(N_Y(S) \cap X) \times [-1,1] \subset T^*\Sigma^n \times \R$. (Here since $L$ is codimension $1$ submanifold of $S$, we can consider the tubular neighbourhood of $L$ in $S$ as the product $L \times [-1,1]$ such that $L$ corresponds to $L \times \{0\}$.) Consider the smooth cut-off function
$$f:\Upsilon^{-1}(N_Y(S) \cap X) \times [-1,1] \to \R, \quad f(x,t)=\mu(t)$$ where $\mu:[-1,1] \to \R$ is a smooth cut-off function which is equal to $1$ near $t=0$, and $0$ near $t=\pm 1$. Also let $\widetilde{\mu}_{x^{K}_{i,j,k}}:\Upsilon^{-1}(N_Y(S) \cap X) \times [-1,1] \to \R$ be the extension of $\mu_{x^{K}_{i,j,k}}$ given by $$\widetilde{\mu}_{x^{K}_{i,j,k}}(x,t)=\mu_{x^{K}_{i,j,k}}(x).$$ Denote by $\widetilde{\textbf{Z}}_{\mu_{x^{K}_{i,j,k}}}$ the contact vector field on $\Upsilon^{-1}(N_Y(S) \cap X) \times [-1,1]$ whose coresponding contact Hamiltonian (as in Lemma \ref{lem:contact_vector_field}) is equal to $f\widetilde{\mu}_{x^{K}_{i,j,k}} \textbf{H}_{x^{K}_{i,j,k}}$. Now we isotope $S$ to its nearby Legendrian copy $S'$ by applying the flow maps of $\widetilde{\textbf{Z}}_{\mu_{x^{K}_{i,j,k}}}$ in the same order and amount that we apply the flow maps of  $\textbf{Z}_{\mu_{x^{K}_{i,j,k}}}$ to isotope $L$ to $L'$. By construction, this isotopy is compactly supported in $\widetilde{V} \times [-1,1]$, and its restriction to $\Upsilon^{-1}(N_Y(S) \cap X) \times \{0\}$ is the isotopy taking $L$ to $L'$ (constructed above) as $\widetilde{\textbf{Z}}_{\mu_{x^{K}_{i,j,k}}}|_{\Upsilon^{-1}(N_Y(S) \cap X) \times \{0\}}=\textbf{Z}_{\mu_{x^{K}_{i,j,k}}}$. In particular, for the new Legendrian submanifold $S'$, its intersection $L'=S' \cap X$ is disjoint from the core $\Delta$ of $X$. \\

Finally, we note that by working on a small enough neighbourhood $N_Y(S)$ one can guarantee that $S'$ lies in any pre-given $\epsilon$-neighbourhood of $S$ in $Y$.
\end{proof}

By the last lemma we may assume that the transverse intersection $L=S \cap X$ of the Legendrian submanifold $S=\phi(\Sigma^n)$ is disjoint from the core $\Delta$ of $X=\varphi^{-1}(p)$. To finish the proof of Theorem \ref{thm:Missing_a_page}, we will construct an isotopy of Legendrian embeddings of $S$ which will be compactly supported in a neighbourhood of $X$ in $Y$ and will push $L$ completely outside $X$. To this end, we will first isotope $S$ along a contact vector field generated from $Z_3$ given in the list (\ref{eqn:Contact_fields}) above so that the part $L \cap Y_1$ of $L$ (recall $Y_1=\Sigma(X,h)_\rho$ is the mapping torus of the open book $\mathcal{OB}_{(X,h)}$) is completely pushed into the interior of $Y_2=B \times D^2$ (tubular neighbourhood of the binding $B=\partial X$). Then using $Z_1, Z_2$ of the list (\ref{eqn:Contact_fields}) we will isotope $S$ until $L \cap Y_2$ completely crosses the binding $B=B \times \{0\}$.

\begin{remark} \label{rem:for_the_last_part}
So far, when we write $X$ we meant the whole page (in particular, $\partial X$ was the binding $B$). However, for what follows  it is better to use the abstract open book desription as in the previous paragraph. Therefore, from now on $X$ will denote the complement of the collar neighbourhood of the binding $B$ in the corresponding page.
\end{remark}

From its construction the contact manifold $(Y_1=\Sigma(X,h)_\rho,\alpha|_{Y_1})$ is obtained as the quotient space of $(X \times \R, \beta+dz)$ using the equivalence relation $\sim_\rho$. Recall that we have $$Y_1=X \times \R / \sim_\rho \quad \textrm{where} \quad (x,z) \sim_\rho (h(x),z+\rho(x)\,).$$
By translating (i.e., isotoping) $S$ along the Reeb direction $Z_4=R_\alpha$ (which corresponds to $\partial z$ on $X \times \R$), we may assume that $X=\varphi^{-1}(p)$ corresponds to $X \times \{0\}$ under this identification. Therefore, we can identify a neighbourhood of $X$ in $Y_1$ with $X \times [-a,a] \subset X \times \R$ for some real number $0<a<K$ where $K>0$ is a constant satisfying $$K < \rho(x), \quad \forall x \in X.$$ (Note that such $K$ exists since $X$ is compact and $\rho$ is a strictly positive continuous function). The contact form $\alpha$ on $Y$ is equal to $\beta + dz$ on $X \times [-a,a]$ where $Z_3$ takes the form $Z:=\chi+z\partial z$ as mentioned earlier. By choosing $a$ small enough, we may guarantee that the intersection $S \cap (X \times [-a,a])$ is equal to $L \times [-a,a]$ ($S$ and $X=X \times \{0\}$ intersect transversally), and also that $L \times [-a,a]$ is disjoint from $\Delta \times [-a,a]$ (this is because all cut-off functions which we used to isotope $S$ to $S'$ in the proof of Lemma \ref{lem:making_disjoint_from_core} are all equal to $1$ near $(L \times \{0\})\cap (\Delta \times \{0\})$.) \\

One can think of $Y_1$ slightly larger by considering $Y_2=B \times D^2$ slightly smaller. More precisely, let $N=B \times D \subset Y_2$ be a smaller neighbourhood of $B$ where $D \subset D^2$ is a smaller disk in $\R^2$ around the origin. By expanding each $(X,\beta)$ in $Y_1$ to a larger domain $(\tilde{X},\tilde{\beta})$, we get another decomposition $Y=Y_1' \cup N$ where $Y_1'=Y\setminus N$.
Note that extending the above identification, a neighbourhood of $\tilde{X}$ in $Y_1'$ can be identified with $\tilde{X} \times [-a,a]$ ($\tilde{X}=\tilde{X} \times \{0\})$ on which the contact form $\alpha$ is given as $\alpha=\tilde{\beta} + dz$ and we have the extension $$\tilde{Z}=\tilde{\chi}+z\partial z$$ of $Z$ where $\tilde{\chi}$ is the $d \tilde{\beta}$-dual of $\tilde{\beta}$. We remark that $\tilde{Z}$ is contact with the contact Hamiltonian $H=z$. Let $\mu_1:Y \to \R$ be a smooth cut-off function such that $\mu_1 \equiv 1$ near $\tilde{X} \times\{0\}$ and $\mu_1 \equiv 0$ in the complement $Y \setminus \tilde{X} \times (-\epsilon,\epsilon)$ for some $0<\epsilon<a$ which will be determined later. Denote by $Z_{\mu_1}$ the contact vector field on $Y$ which corresponds to the contact Hamiltonian $\mu_1 H_3$.\\

Now we first push $S$ using the flow $Z_{\mu_1}^t$ until the image of $L$ lies completely in the interior $\textrm{int}(Y_2)$ of $Y_2$ as follows: First note that $Z_{\mu_1}=\tilde{\chi}$ on $\tilde{X}=\tilde{X} \times \{0\}$ (as $z=0$ there). Since $L$ is disjoint from the core of $\tilde{X}$ (which is the same as that of $X$), for every point $x \in L \cap \tilde{X}$ there exists a unique flow line of $\tilde{\chi}$ (i.e., of $Z_{\mu_1}|_{\tilde{X}}$) passing through $x$. All such flow lines reach the region $\tilde{X} \cap \textrm{int}(Y_2 \setminus N)$. Consider the set
$$A=\{t\in (0,\infty)\; |\; Z_{\mu_1}^t(x) \in \tilde{X} \cap \textrm{int}(Y_2 \setminus N) \quad \textrm{for all} \; x \in L \cap \tilde{X}\}.$$
Since $L \cap \tilde{X}$ is a compact, the set $A$ is non-empty. Choose a finite number $T>0$ from $A$. We isotope the Legendrian sphere $S \subset Y$ using the flow maps $\{Z_{\mu_1}^t\; | \; 0\leq t \leq T \}$. Indeed, by the construction of $Z_{\mu_1}$,  we only push the region $(L \cap \tilde{X}) \times (-\epsilon,\epsilon) \subset S$ during the isotopy. Observe that by choosing $\epsilon>0$ small enough, we can guarantee that the image $Z_{\mu_1}^T(L \times (-\epsilon,\epsilon))$ completely lies in $\tilde{X}\times [-a,a]$. Therefore, $Z_{\mu_1}^T(S)$ is an embedded Legendrian which is Legendrian isotopic to $S$ (see Figure \ref{fig:making_closer_to_binding}).\\

\begin{figure}[ht]
\begin{center}
\includegraphics{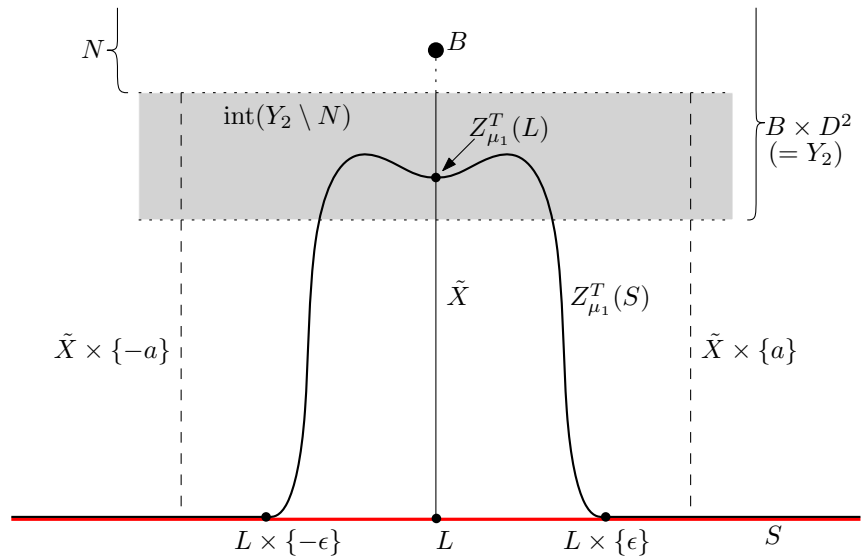}
\caption{Making $L$ closer to the binding via a Legendrian isotopy of $S$.}
\label{fig:making_closer_to_binding}
\end{center}
\end{figure}

By the choice of the flow parameter $T$ above, we know that $Z_{\mu_1}^T(L)$ is completely lie in the region $\textrm{int}(Y_2)$. For $0<r<1$ consider the disk $$D_r=\{(x,y)\in D^2 \mid x^2+y^2\leq r^2 \} \subset D^2$$ and the neighborhood $N_{r}=B \times D_r \subset \textrm{int}(B \times D^2)=\textrm{int}(Y_2)$ of the binding. Then after the above isotopy we know that $Z_{\mu_1}^T(L)  \subset N_{r}$ for some $r$. Let $(a_1,a_2) \in D^2$ be a point which corresponds to the angular coordinate $p \in S^1$ (recall that $\tilde{X}=\tilde{X} \times \{p\})$. Recall the contact vector fields $Z_1,Z_2$ from the list (\ref{eqn:Contact_fields}) and their contact Hamiltonians $H_1,H_2$. Then the vector field $Z'=a_1Z_1+a_2Z_2$ is also contact whose contact Hamiltonian is given by $H'=a_1H_1+a_2H_2$. Let $\mu_2:Y \to \R$ be a smooth cut-off function such that $\mu_2 \equiv 1$ near $N_r$ and $\mu_2 \equiv 0$ in the complement $Y \setminus Y_2$. Denote by $Z_{\mu_2}$ the contact vector field on $Y$ which corresponds to the contact Hamiltonian $\mu_2 H'$. Now we isotope $Z_{\mu_1}^T(S)$ using the flow maps of the contact vector field $Z_{\mu_2}$ until $Z_{\mu_1}^T(L)$ completely crosses the binding. Say for $T'>0$ the image of $Z_{\mu_1}^T(L)$ under the flow map $Z_{\mu_2}^{T'}$ completely crosses the binding. As a result, the closure of the page $\varphi^{-1}(p)$ of the open book $\mathcal{OB}_{(X,h)}$ is disjoint from the final image $S':=Z_{\mu_2}^{T'}(Z_{\mu_1}^T(S))$ which is Legendrian isotopic to $S=\phi(\Sigma^n)$ as claimed.
This finishes the proof of Theorem \ref{thm:Missing_a_page}.
\end{proof}


\begin{proof}[\textbf{Proof of Theorem \ref{thm:Main_Theorem_1}}]
Let $\mathcal{OB}$ be an open book decomposition carrying a contact structure $\xi$ on a (closed) manifold $M$ of dimension $2n+1\geq 3$. In particular, there exists a contact form $\alpha$ for $\xi$ such that $\alpha$ restricts to a Liouville form on every page of $\mathcal{OB}$. By assumption, for any page $X$ of $\mathcal{OB}$, the restriction $\alpha|_X$ is, indeed, the underlying Liouville form of a Weinstein structure on $X$.\\

Pick a page $X$ equipped with the Liouville form $\beta:=\alpha|_X$ which is, by assumption, the underlying Liouville form of a Weinstein structure on $X$. Denote by $h \in \textrm{Symp}(X,d\beta)$ the monodromy of $\mathcal{OB}$ and consider the manifold $M(X,h)$ equipped with the contact structure $\xi_\beta=\textrm{Ker}(\alpha_\beta)$ where $\alpha_\beta$ is the contact form on $M(X,h)$ constructed as in Remark \ref{rem:constructing_contact_form}. Let $$\xi_X:=(\Upsilon_X^{-1})_*(\xi_\beta)$$ be the contact structure on $M$ obtained by pushing forward $\xi_\beta$ using the inverse of the identification map $\Upsilon_X:M \to M(X,h)$. As explained at the beginning of this section, $\Upsilon_X$ respects the fibration maps on $M\setminus B$ and $M(X,h)\setminus (\partial X \times \{0\})$  associated to the open books $\mathcal{OB}$ and $\mathcal{OB}_{(X,h)}$, respectively. Therefore, we have a contactomorphism  $$\Upsilon_X:(M,\xi_X) \to (M(X,h),\xi_\beta)$$ mapping pages of $\mathcal{OB}$ to those of $\mathcal{OB}_{(X,h)}$. \\

Next, observe that $\xi$ and $\xi_X$ are supported by the same open book $\mathcal{OB}$ (by construction of $\xi_X$), so we know, by Giroux's work, that there exists an isotopy $\xi_t$ ($t \in [0,1]$) of contact structures on $M$ connecting $\xi_0=\xi$ and $\xi_1=\xi_X$. Then Gray's Stability (see, for instance, Theorem 2.2.2 of \cite{Geiges}) implies that there is a diffeotopy $$\Xi_t: M \to M, \quad t \in [0,1],$$ such that $(\Xi_t)_*(\xi_0)=\xi_t$ for each $t$. In particular, $(\Xi_1)_*(\xi)=\xi_X$, and hence we obtain two contactomorphisms $$f_X:=\Xi_1:(M,\xi) \to (M,\xi_X), \quad \Upsilon_X \circ f_X:(M,\xi) \to (M(X,h),\xi_\beta).$$

\vspace{.3cm}
Suppose now we are given a Legendrian embedding $\psi: \Sigma^n \hookrightarrow (M,\xi)$ of a compact  Legendrian submanifold $L=\psi(\Sigma^n)$. By pushing forward $\psi$ using the above contactomorphisms, we obtain two Legendrian embeddings $$f_X \circ \psi:\Sigma^n \hookrightarrow(M,\xi_X), \quad \phi:=\Upsilon_X \circ f_X \circ \psi:\Sigma^n \hookrightarrow (M(X,h),\xi_\beta).$$

\vspace{.3cm}
We set $S:=\phi(\Sigma^n)$. By Theorem \ref{thm:Missing_a_page} we have a smooth $1$-parameter family $$\Phi_t:\Sigma^n \hookrightarrow (M(X,h),\xi_\beta), \quad t \in [0,1]$$ of Legendrian embeddings such that $\Phi_0(\Sigma^n)=S$ and the Legendrian submanifold $S\,':=\Phi_1(\Sigma^n)$ is disjoint from the closure of a page of the open book $\mathcal{OB}_{(X,h)}$ on $M(X,h)$ associated to $(X,h)$. By composing $\Phi_t$ with the contactomorphism $\Upsilon_X^{-1}$, we obtain a smooth $1$-parameter family $$\Psi_t:=\Upsilon_X^{-1} \circ \Phi_t:\Sigma^n \hookrightarrow (M,\xi_X), \quad t \in [0,1]$$ of Legendrian embeddings such that $\Psi_0(\Sigma^n)=f_X(L)$. Finally, using this isotopy and the fact that $\Upsilon_X^{-1}$ is mapping pages of $\mathcal{OB}_{(X,h)}$ to the corresponding pages of $\mathcal{OB}$, we conclude that the Legendrian submanifold  $$\Psi_1(\Sigma^n)=\Upsilon_X^{-1}(S\,')$$ is Legendrian isotopic to $f_X(L)$ and disjoint from a page of the open book $\mathcal{OB}$. Thus, setting $f:=f_X$ and $\xi':=\xi_X$ finishes the proof of Theorem \ref{thm:Main_Theorem_1}.
\end{proof}


\addcontentsline{toc}{chapter}{\textsc{References}}

\addcontentsline{TOC}{chapter}{References}


\begin{thebibliography}{99}



\bibitem{CE} K. Cieliebak and Y. Eliashberg,
{\em From Stein to Weinstein and back. Symplectic geometry of affine complex manifolds}, American Mathematical Society Colloquium Publications, 59. American Mathematical Society, Providence, RI, 2012.





\bibitem{Et} J. B. Etnyre,
{\em Open book decompositions and the Giroux correspondence}, Lecture notes from Introductory Workshop: Symplectic and Contact Geometry and Topology, MSRI, Berkeley CA, August 17-21, 2009. (http://people.math.gatech.edu/~etnyre/talks/MSRI-OBDSurvey.pdf)

\bibitem{Geiges} H. Geiges,
{\em An Introduction to Contact Topology, Cambridge University Press}, (2008).

\bibitem{Gi} E. Giroux,
{\em G\'eom\'etrie de contact: de la dimension trois vers les dimensions sup\'erieures}, Proceedings of the International Congress of Mathematicians, Vol. II (Beijing), Higher Ed. Press,
(2002), pp. 405-414. MR 2004c:53144

\bibitem{Gi2} E. Giroux,
{\em Contact structures and symplectic fibrations over the circle}, trans- parencies of a seminar talk.


\bibitem{TW} W.~P. Thurston, H.~E. Winkelnkemper,
{\em On the existence of contact forms}, Proc. Amer. Math. Soc. {\bf
52} (1975), 345--347.

\bibitem{W} A. Weinstein,
{\em Contact surgery and symplectic handlebodies}, Hokkaido Math. J. 20 (1991), no. 2, 241-251.

\end{thebibliography}
\end{document}